\pgfplotsset{width=8cm,compat=1.9}
\g@addto@macro\th@plain{\thm@headpunct{}}
\newtheorem{theorem}{Theorem}[section]
\newtheorem{proposition}[theorem]{Proposition}
\newtheorem{Corollary}[theorem]{Corollary}
\newtheorem{lemma}[theorem]{Lemma}
\newtheorem{remark}[theorem]{Remark}
\def \R {{\mathbb R}}
\def\N{{\mathbb{N}}}
\def\Z{{\mathbb{Z}}}
\def \P {{\mathbb P}}
\def \E {{\mathbb E}}
\def \Q {{\mathbb Q}}
\def \H {{\mathbf H}}
\def \V {{\mathbf V}}
\newcommand{\dd}{\mathrm{d}}
\newcommand{\supp}{\mathrm{supp}}
\newcommand{\Ind}{\mathbf{1}_}
\newcommand{\eps}{\varepsilon}
\newcommand{\g}{\gamma }
\newcommand{\bt}{\beta^{\ast}}
\newcommand{\ba}{\tilde{\beta}}
\renewcommand{\a}{\alpha}
\renewcommand{\b}{\beta}
\renewcommand{\d}{\delta}
\title[Stochastic recursions]{Stochastic recursions: between Kesten's and Grincevi\v{c}ius-Grey's assumptions}
\author[E. Damek]{Ewa Damek}
\address{Institute of Mathematics\\ Wroclaw University\\ 50-384 Wroclaw\\
pl. Grunwaldzki 2/4\\ Poland}
\email{edamek@math.uni.wroc.pl}
\author[B. Ko\l{}odziejek]{Bartosz Ko\l{}odziejek}
\address{Faculty of Mathematics and Information Science\\Warsaw University of Technology\\ Koszykowa 75\\00-662 Warsaw, Poland}
\email{b.kolodziejek@mini.pw.edu.pl}
\subjclass[2010]{Primary 60H25; secondary 60E99}
\keywords{perturbed random walk; perpetuity; regular variation; renewal theory.}
\begin{document}

\begin{abstract}
We study the stochastic recursion $X_n=\Psi_n(X_{n-1})$, where $(\Psi_n)_{n\geq 1}$ is a sequence of i.i.d. random Lipschitz mappings close to the random affine transformation $x\mapsto Ax+B$. We describe the tail behaviour of the stationary solution $X$ under the assumption that 
there exists $\alpha>0$ such that $\E |A|^{\a}=1$ and the tail of $B$ is regularly varying with index $-\a<0$. We also find the second order asymptotics of the tail of $X$ when $\Psi(x)=Ax+B$.
\end{abstract}

\maketitle
\section{Introduction}
\subsection{Results and motivation} Let $(\Psi_n)_{n\geq 1}$ be a sequence of i.i.d (independent identically distributed) random Lipschitz real mappings. Given $X_0$ independent of $(\Psi_n)_{n\geq 1}$ we study stochastic recursions
\begin{equation}\label{affine}
X_n=\Psi_n(X_{n-1}), \quad n\geq 1
\end{equation} 
known also as iterated function systems (IFS). Beginning from the early nineties IFS modeled on Lipschitz functions attracted a lot of attention \cite{Alsm2014,AC,BB,DF,Du,Elton,HH,Mi}.
Under mild contractivity hypotheses, $X_n$ converges in law to a random variable $X$ satisfying (in distribution)
\begin{equation}\label{affine1}
X\stackrel{d}{=}\Psi(X),\qquad X\mbox{ and }\Psi\mbox{ are independent},
\end{equation} 
where $\Psi$ is a generic element of the sequence $(\Psi_n)_{n\geq 1}$ \cite{DF,Elton}. However, to describe the tail of $X$ some further assumptions are needed. Usually one assumes that $\Psi $ is close to an affine mapping or, more precisely, that  for every $x\in\R$
\begin{align}\label{ineqPsi1}
Ax+B_1\leq \Psi(x)\leq Ax+B,\qquad \mbox{a.s.}
\end{align}
with $A$, $B$ and $B_1$ nice enough.
The reason is that if $\Psi (x)=Ax+B$, then the tail of stationary distribution is thoroughly described under various assumptions on $A$ and $B$, see Section \ref{previous}.

 In the present paper we consider two kinds of approximations: \eqref{ineqPsi1} and the case when
$\Psi\colon \R\to \R$ is a random {Lipschitz} mapping satisfying for all $x\geq0$
\begin{align}\label{ineqPsi}
\max\{Ax,B_1\}\leq \Psi(x)\leq Ax+B,\qquad \mbox{a.s.}
\end{align}
Under suitable conditions on $A$, $B$ and $B_1$ we obtain asymptotics of $\P(X>x)$ as $x\to \infty$ in both cases, see Theorems \ref{main1} and \ref{main2}.

There is a number of the papers on the subject \cite{Alsm2014,BD,DD,Dy,Mi}, where the IFS are modeled on the assumptions needed to handle the tail in the affine recursion. Typical conditions exhibit existence of certain moments of $A$ and $B$ or regular behaviour of their tails and in all the settings considered up to now either $A$ or $B$ has  basically the ultimate influence on the tail, not both. A short overview is given in Section \ref{previous}.

We study an opposite situation. For the time being, we assume $A\geq 0$ a.s., $\E A^{\alpha}=1$ and $B, B_1$ have right tails regularly varying with index $-\alpha$ for some $\alpha>0$ such that $\E |B|^{\a }, \E |B_1|^{\a }$ are infinite\footnote{If $\P (B>x)\sim x^{-\alpha }L(x)$ then $\E |B|^{\alpha }$ may be finite or infinite depending on the slowly varying function $L$}. Our starting point is the tail behaviour of $X_{\max}$ being the stationary solution to ``so called'' extremal recursion,  corresponding to $\Psi (x)=\max \{Ax,B\}$. Then  
\begin{align}\label{eq:max}
x^\alpha \P(X_{\max}>x)\sim\frac{1}{\rho}\int_{0}^x\frac{L(t)}{t}\dd t \quad \mbox{as}\ x\to\infty,
\end{align}
see \cite{DK18}, where $L(x)=x^{\a}\P (B>x)$ is assumed to be slowly varying function, $\sim$ is defined in \eqref{sim} and $\rho$ is as in {\rm(A-2)}\footnote{Note that there is no issue with integrability of $L(t)/t$ near $0+$ because $L(t)\leq t^\alpha$ for $t>0$.}.
More precisely, if conditions {\rm(A-1)}, {\rm(A-2)}, {\rm(B-1)}, {\rm(AB-1)} defined in Theorem \ref{perpet}
hold then \eqref{eq:max} follows and the right hand side of \eqref{eq:max} is due to both the behaviour of $B$ and of an appropriate renewal measure determined by $A$. Moreover, $\int_{0}^xL(t)t^{-1}\dd t/L(x)$ tends to infinity as $x\to\infty$ and $x\mapsto \int_{0}^xL(t) t^{-1}\dd t$ is again slowly varying.

The next step is to prove a result in the spirit of \eqref{eq:max} for
$\Psi (x)=Ax+B$, see Theorem \ref{perpet} below.  While the behaviour of the right tails of stationary distribution of the extremal and the affine recursion turn out to be the same, the asymptotics 
$$
x^{\alpha }\P (X>x)\sim \frac{1}{\rho}\int_{0}^x\frac{L(t)}{t}\dd t$$
of $X$ corresponding to \eqref{ineqPsi} is a straight forward conclusion, Theorem \ref{main1} in Section \ref{lipschitz}. Neither the affine recursion nor iterated function systems have been considered under these assumptions and the appearance of the function 
\begin{equation}\label{tilde}
\widetilde{L}(x)=\int_{0}^x\frac{L(t)}{t}\dd t \end{equation} 
is probably the most interesting phenomenon here. For the IFS satisfying \eqref{ineqPsi1} we prove that both $\P (X>x)$ and $\P (X<-x)$ have similar behaviour for large $x$, Theorem \ref{main2}.

\subsection{Perpetuities} Before we formulate precisely the results for Lipschitz iterations let us discuss solutions to the affine recursion with  $\Psi (x)=Ax+B$. Such solutions, if exist, are called \emph{perpetuities} and throughout the paper
they will be denoted by $R$. It exists and it is unique if $\E \log|A| <0$ and $\E \log ^+|B|<\infty $ which is guaranteed by assumptions of Theorem \ref{perpet}. For two functions $f,g$ we write 
\begin{equation}\label{sim}
f(x)\sim g(x)\qquad\mbox{if} \qquad \lim _{x\to \infty} f(x)\slash g(x)=1.\end{equation}
Recall that $L$ is slowly varying if $L(x)\sim L(\lambda x)$ for any $\lambda>0$. Let $B_+=\max\{B,0\}$ and $B_-=\max\{-B,0\}$. We have the following theorem
\begin{theorem}\label{perpet}
Suppose that 
\begin{itemize}
	\item[\rm (A-1)] $A\geq 0$ a.s. and the law of $\log A$ given $A>0$ is non-arithmetic,
	\item[\rm (A-2)] there exists $\alpha>0$ such that 
				 $\E A^{\alpha}=1$, $\rho:=\E A^{\alpha}\log A<\infty$,
  \item[\rm (B-1)] $L(x):=x^\alpha\P(B>x)$ is slowly varying,  $\E B_+^\alpha=\infty$ and $\E B_-^{\alpha-\eps}<\infty$ for all $\eps\in(0,\alpha)$,
			\item[\rm (AB-1
)] $\E A^{\eta}B_+^{\alpha-\eta}<\infty$ for some $\eta\in(0,\alpha)\cap (0,1]$.
	\end{itemize}
Then
\begin{align}\label{eq:objectiver}
x^\alpha \P(R>x)\sim\frac{1}{\rho}\int_{0}^x\frac{L(t)}{t}\dd t\quad  
\end{align}
\end{theorem}

We see that the behaviour of $\P(R>x)$ as $x\to\infty$ is described in terms of the behaviour of the tail of $B_+$. Accordingly, the behaviour of $P(R<-x)$ depends on the tail of $B_-$. To see this, let us denote $B_1=-B$. Then, $R_1=-R$ satisfies
\[
R_1\stackrel{d}{=}A R_1+B_1,\qquad R_1\mbox{ and }(A,B_1)\mbox{ are independent.}
\]
and the right tail of $R_1$ is the same as the left tail of $R$. We thus obtain the following result.

\begin{Corollary}
	Assume {\rm(A-1)} and {\rm(A-2)} and   
	\begin{itemize}
		\item[{\rm (B-2)}] $L_1(x):=x^\alpha\P(B<-x)$ is slowly varying, $\E B_-^\alpha=\infty$ and $\E B_+^{\alpha-\eps}<\infty$ for all $\eps\in(0,\alpha)$,
		\item[{\rm (AB-2)}] $\E A^{\eta}B_-^{\alpha-\eta}<\infty$ for some $\eta\in(0,\alpha)\cap (0,1]$,
	\end{itemize}
	then
	\begin{align}\label{eq:objectivel}
	x^\alpha \P(R<-x)\sim\frac{1}{\rho}\int_{0}^x\frac{L_1(t)}{t}\dd t.
	\end{align}
	Finally, if all the above assumptions and additionally {\rm (B-1)}, {\rm (AB-1)} are satisfied, then we have both \eqref{eq:objectiver} and \eqref{eq:objectivel} with possibly different slowly varying functions $L$ and $L_1$.
\end{Corollary}

To obtain tail asymptotics one usually applies an appropriate renewal theorem and so do we. However, what we need goes beyond existing results and we prove a new one, Theorem \ref{LTH}. 
Note that under {\rm(A-1) and} {\rm(A-2)}
$$\rho=\E A^{\alpha}\log A$$ 
 is strictly positive.
Indeed, consider $f(\beta):=\E A^\b$. Since $f(0)=1=f(\alpha)$, $f$ is convex, we have $f'(\a)=\rho>0$.
Secondly, observe that, under $\E |B|^{\alpha -\eta }<\infty $, \eqref{eq:objectiver} depends only on the regular behaviour of the right tail of $B_+$ and so we may obtain different asymptotics for $\P (R>x)$ and $\P (R<-x)$ if it is so for $B$. It follows from \eqref{Bax} that 
\[
\int_{0}^x\frac{L(t)}{t}\dd t \sim \frac{1}{\alpha} \E B_+^\alpha \Ind{B\leq x}.
\]
and so the right hand sides of \eqref{eq:objectiver} and
\eqref{eq:objectivel} tend to $\infty $ when $x\to \infty$.
Finally, conditions {\rm(AB-1)} and {\rm(AB-2)} require a comment. If $\E A^{\alpha +\eps }<\infty $ for some $\eps >0$ then they both are satisfied by H\"older inequality. But much less is needed. Namely, if $\E A^{\alpha }W(A)<\infty $, where $W(x)=\max \{\widetilde{L}(x),\log x\}$ then 
{\rm(AB-1)} and \rm {(AB-2)} hold,
see the Appendix.

Next we study the second order asymptotics of the right tail of $R$. Assuming more regularity of $\log A$, we prove that
\begin{equation}\label{eq:second}
\Big | x^{\a }\P (R>x)-\frac{1}{\rho }\int _0^x\frac{L(t)}{t}\dd t-C\Big |= O(L(x))+o(1),\quad \mbox{as}\ x\to \infty, 
\end{equation}
for some constant $C$; see Theorem \ref{perp2}. Notice that either $L(x)$ or $1$ may dominate the right hand side of \eqref{eq:second}. \eqref{eq:second} holds  
when the renewal measure determined by $\log A$ 
satisfies
\[
\H((x,x+h])\leq c\max\{h^\beta,h\}
\]
for some $\beta\in(0,1)$ and for all $x, h \geq 0$ - see Lemma \ref{FROST}.
In view of \cite{Gol91} and \cite{BDP} it is not much of a surprise that stronger assumptions on $\H $ are needed to describe the second order asymptotics of the tail of a perpetuity.

Finally, we develop a new approach to deal with signed $A$. We show how to reduce ``signed $A$'' to ``non-negative $A$'' (see Theorem \ref{signedB} (i)) and we apply our result to the case when $\E|A|^\a=1$. The method is quite general and it is applicable beyond our particular assumptions.

\subsection{Previous results on perpetuities}\label{previous}
$\P(R>x)$ converges to zero when $x$ tends to infinity and a natural problem consists of describing the rate at which this happens. Depending on the assumptions on $(A,B)$ we may obtain light-tailed $R$ (all the moments exist) or a heavy tailed $R$ (certain moments of $|R|$ are infinite). The first case occurs when $\P(|A|\leq 1)=1$ and $B$ has the moment generating function in some neighbourhood of the origin, see \cite{Root,GG96, HitWes09,BKc16, BDIM, BK18}. 

The second case is when 
$\P (|A|>1)>0$ with $\E \log |A|<0$ and $|A|, |B|$ have some positive moments.
Then the tail behaviour of $R$ may be determined by $A$ or $B$ alone, or by both of them. The first case happens when the tail of $B$ is regularly varying with index $-\a<0$, $\E |A|^{\a }<1$ and $\E |A|^{\a+\eps}<\infty$ for some $\eps>0$. Then 
\begin{equation}\label{grey}
\P (R>x)\sim c\,\P (B>x),\end{equation}
see \cite{Gri75, Gre94}. 
Also it may happen that 
\[
\P (R>x)\sim c\,\P (A>x)
\]
when $\E |A|^{\a }<1$ but $\P (|B|>x)=O(\P (A>x))$, see \cite{DD}. 
 When $\E |A|^{\a }=1$, $\E |B|^{\a }<\infty $, $\E |A|^{a }\log ^+ |A|<\infty$ and the law of $\log |A|$ given $\{A\neq 0\}$ is non-arithmetic, then \cite{Kes73,Gri75,Gol91}
\begin{equation}\label{gol}
\P (R>x)\sim c\,x^{-\a }\end{equation}
and it is $A$ that plays the main role. When $\E |A|^{a }\log ^+ |A|=\infty$ an extra slowly varying function $l$ appears in \eqref{gol}, i.e.
\begin{equation}\label{kevei}
\P (R>x)\sim c\, l(x) x^{-\a }.\end{equation} 
\eqref{kevei} was proved by \cite{Kevei2016} for $A\geq 0$ but applying our approach to signed $A$ (see Section \ref{general}) we may conclude \eqref{kevei} also there \footnote{For the results in the case when $\max\{|A|,|B|\}$ does not have positive moments we refer to \cite{Dy}.}.

In view of all that it is natural to go a step further and to ask what happens when at the same time $A$ and $B$ contribute significantly to the tail in the sense of {\rm(A-2)} and {\rm(B-1)}.

\subsection{Lipschitz iterations}\label{lipschitz}
In this section we state the results for IFS and we show how do they follow from \eqref{eq:max} and Theorem \ref{perpet}. We assume that $\Psi $ satisfies conditions sufficient for existence of stationary solution. Let $L(\Psi )$, $L(\Psi _{n,1})$ be the Lipschitz constants of $\Psi $, $\Psi _{n,1}=\Psi _n\circ \dots \circ \Psi _1$ respectively.
If $\E \log ^+L(\Psi )<\infty $, $\E \log ^+ | \Psi (0)|<\infty $  and
\begin{equation}\label{contr}
\lim _{n\to \infty }\frac{1}{n}\log L(\Psi _{n,1})<0 \quad a.s.
\end{equation}
then
 $X_n$ converges in distribution to a random variable $X$, which does not depend on $X_0$ and satisfies \eqref{affine1}.

 For slowly varying functions $L_r$ and $L_{1,r}$ let us denote
	\[
	\widetilde{L}_r(x) = \int_0^x \frac{L_r(t)}{t}\dd t\qquad\mbox{and}\qquad \widetilde{L}_{1,r}(x) = \int_0^x \frac{L_{1,r}(t)}{t}\dd t.
	\]
\begin{theorem}\label{main1}
Suppose that {\rm(A-1)}, {\rm(A-2)}, {\rm(B-1)} and {\rm(AB-1)} are satisfied both for $B$ and $B_1$ with $L=L_{r}$ and $L=L_{1,r}$ respectively. Let $\Psi$ be such that
	\begin{equation}\label{maxaff}
	\max\{Ax,B_1\}\leq \Psi(x)\leq Ax+B,\quad \mbox{a.s.},\quad x\geq0.
	\end{equation}

Then for every $\eps >0$ and $x$ sufficiently large
\begin{align}\label{eq:objective3}
\frac{1-\eps }{\rho} \widetilde{L}_{1,r}(x)\leq  x^\alpha \P(X>x)\leq \frac{1+\eps} {\rho}\widetilde{L}_r(x).
\end{align}
Particularly, if $\widetilde{L}_r(x)\sim \widetilde{L}_{1,r}(x)$ then   
\begin{equation}\label{eq:objective4}
x^\alpha \P(X>x)\sim \frac{1}{\rho}\widetilde{L}_r(x).
\end{equation}

\end{theorem}

\begin{theorem}\label{main2}
If a function $\Psi$ satisfies 
\begin{equation}\label{sandwich}
Ax+B_1\leq \Psi(x)\leq Ax+B,\quad \mbox{a.s.},\quad x\in\R,
\end{equation}
then under the assumptions of Theorem \eqref{main1}, assertions \eqref{eq:objective3} and \eqref{eq:objective4} hold true.

If {\rm(A-1)}, {\rm(A-2)}, {\rm(B-2)} and {\rm(AB-2)} hold both for $B$ and $B_1$ then we have analogous conclusions for $\P (X<-x)$.
\end{theorem}
Theorems \ref{main1} and \ref{main2} follow quickly from Theorem \ref{perpet} and \eqref{eq:max} (i.e. Theorem 4.2 of \cite{DK18}). To see this let us consider Theorem \ref{main2}. Let
\[
R_n=A_nR_{n-1}+B_n,\quad R_{n,1}=A_nR_{n-1,1}+B_{n,1}
\]
with $R_0=R_{0,1}=X_0$. Then for every $n$,
\[
R_{n,1}\leq X_n\leq R_n\quad \mbox{a.s.}
\]
and so
\[
R_{1}\leq X\leq R,\quad\mbox{a.s}
\] 
where $R\stackrel{d}{=}AR+B$ with $(A,B)$ independent of $R$ and similarly for $(A_1,B_1,R_1)$. Hence
\[
x^{\a }\P (X>x) / \widetilde{L}_r(x) \leq x^{\a }\P (R>x) / \widetilde{L}_r(x).
\]
Letting $x\to \infty $, we obtain 
\[ 
\limsup _{x\to \infty}x^{\a }\P (X>x)/\widetilde{L}_r(x)\leq \lim _{x\to \infty}x^{\a }\P (R>x)/\widetilde{L}_r(x)=\frac{1}{\rho},
\]
which implies the right hand side of \eqref{eq:objective3}. The left hand side is obtained analogously. Clearly $\widetilde{L}_r(x)\sim \widetilde{L}_{1,r}(x)$ implies \eqref{eq:objective4}. In the same way we proceed for the proof of Theorem \ref{main1}.

Let us comment on stochastic iterations that fall under the assumption of Theorem \eqref{main1}. Subtracting $A x$ from \eqref{maxaff} we arrive at
\[ 
(B_1 - A x)_+ \leq \Theta(x) \leq B, \qquad\mbox{a.s.,}\qquad x\geq 0,
\]
where we have defined $\Theta(x) : = \Psi(x) - A x$. Analyzing this condition geometrically,  we see that $\Psi(x) = A x + \Theta(x)$ satisfies \eqref{maxaff} if for each $x\geq0$, the value of $\Theta(x)$ belongs (a.s.) to patterned part of the figure below.  If $\Theta(x) \in [B_1,B]$ a.s. for each $x$, then $\Psi$ satisfies \eqref{sandwich}. Moreover, $\Theta$ may be chosen in the way that $\E \log L(\Theta)<0$ which implies \eqref{contr}.

\begin{center}
	\begin{tikzpicture}
	\begin{axis}[
	axis x line = left,
	axis y line = left,
	xlabel = {$x$},
	xticklabels = {0,0, ,$-B_1/A$},
	yticklabels = {0, 0,, , $B_1$, $B\,$},
	ymin = 0,
	ymax = 86,
	xmin = 0,
	xmax = 6,
	major tick length = 0,
	]
	
	\addplot [
	name path = A,
	domain=0:6, 
	samples=100, 
	color=red, 
	thick
	]
	{80};
	
	\addplot [
	name path = B,
	domain=0:6, 
	samples=100, 
	color=blue,
	thick
	]
	{max(0,60 - x*30)};
	
	\addplot [gray, pattern = north east lines, pattern color = gray!50] fill between [ of=B and A];
	\end{axis}
	\end{tikzpicture}
\end{center}

\subsection{Structure of the paper}
Theorem \ref{perpet} is proved in Section \ref{taila}. Section \ref{second}. is devoted to the second order asymptotics. 
Before, we need some preliminaries on the renewal theory. A renewal theorem which is the basic tool is formulated in Section \ref{reneval} and proved in the last section. Subsection \ref{extrareg} contains material needed only for the second order asymptotics.
We deal with general $A$ in Section \ref{general}.
 
\section{Preliminaries}\label{Preli}

\subsection{Regular variation}
A measurable function $L\colon(0,\infty)\to(0,\infty)$ is called \emph{slowly varying}, (denoted $L\in R(0)$), 
if for all $\lambda>0$,
\begin{align}\label{reg}
\lim_{x\to\infty}\frac{L(\lambda x)}{L(x)}=1.
\end{align}
For $\rho\in\R$ we write $R(\rho)$ for the class of \emph{regularly varying functions with index $\rho$}, which consists of functions $f$ of the form $f(x)=x^\rho L(x)$ for some $L\in R(0)$. 

If $L\in R(0)$ is bounded away from $0$ and $\infty$ on every compact subset of $[0,\infty)$, then for any $\delta>0$ there exists $A=A(\delta)>1$ such that (Potter's Theorem, see e.g \cite{BDM2016}, Appendix B)
\begin{equation}\label{potter}
\frac{L(y)}{L(x)}\leq A \max\left\{\left(\frac yx\right)^{\delta},\left(\frac xy\right)^{\delta}\right\},\qquad x,y>0.\end{equation}

Assume that $L\in R(0)$ is locally bounded on $(x_0,\infty)$ for some $x_0>0$. Then, for $\alpha>0$ one has
\begin{align}\label{Lreg}
\int_{x_0}^x t^\alpha \frac{L(t)}{t}\mathrm{d}t\sim \alpha^{-1}x^\alpha L(x)
\intertext{and this result remains true also for $\alpha=0$ in the sense that}
\frac{\int_{x_0}^x \frac{L(t)}{t}\mathrm{d}t}{L(x)}\to\infty\qquad\mbox{ as }x\to\infty,\label{Lreg2}
\end{align}
\cite[Proposition 1.5.9a]{BGT89}. Define $\widetilde{L}_{x_0}(x):=\int_{x_0}^x t^{-1}L(t)\dd t$. Function $\widetilde{L}_{x_0}$ is sometimes called de Haan function. It is again slowly varying and has the property that for any $\lambda>0$,
\begin{align}\label{smallint}
\frac{\widetilde{L}_{x_0}(\lambda x)-\widetilde{L}_{x_0}(x)}{L(x)}=\int_1^{\lambda}\frac{L(xt)}{L(x)}\frac{\dd t}{t}\to\log\lambda,
\end{align}
To prove it, use the fact that convergence in \eqref{reg} is locally uniform \cite[Theorem 1.5.2]{BGT89}.

\subsection{Renewal theory}
Let $(Z_k)_{k\geq 1}$ be the sequence of independent copies of random variable $Z$ with $\E Z>0$. 
We write $S_n=Z_1+\ldots+Z_n$ for $n\in\mathbb{N}$ and $S_0=0$.
The measure $\H$ defined on Borel sets $\mathcal{B}(\R)$ by 
$$\H(B):=\sum_{n=0}^\infty \P(S_n\in B),\qquad B\in\mathcal{B}(\R)$$
is called the \emph{renewal measure of $(S_n)_{n\geq 1}$},
$H(x):=\H((-\infty,x])$ is called the \emph{renewal function}. If $\E Z>0$, then $H(x)$ is finite for all $x\in\R$ if and only if $\E Z_-^2<\infty$ (\cite{KM96}).

We say that the distribution of $Z$ is \emph{arithmetic} if its support is contained in $d\Z$ for some $d>0$; otherwise it is \emph{non-arithmetic}. Equivalently, the distribution of $Z$ is arithmetic if and only if there exists $0\neq t\in\R$ such that $f_Z(t)=1$, where $f_Z$ is the characteristic function of the distribution of $Z$.
The law of $Z$ is \emph{strongly non-lattice} if the Cramer's condition is satisfied, that is, $\limsup_{|t|\to\infty}|f_Z(t)|<1$.

A fundamental result of renewal theory is the Blackwell theorem (see \cite{Black53}): if the distribution of $Z$ is non-arithmetic, then for any $h>0$, 
\[\lim_{x\to \infty }\H((x,x+h])\to \frac{h}{\E Z}.\]
Note that in the non-arithmetic case, since $\H((x,x+h])$ is convergent as $x\to\infty$ we have \mbox{$C=\sup_{x}\H((x,x+1])<\infty$} and so
\begin{align}\label{Hineq1}
\H\left((x,x+h]\right)\leq \left\lceil h\right\rceil C\leq \alpha h+\beta ,\quad \mbox{for}\ x\in \R .
\end{align}
for some positive $\alpha$, $\beta$ and any $h>0$. 

Under additional assumptions we know more about the asymptotic behaviour of $\H$ and $H$ (see \cite{Stone65}).
If for some $r>0$ one has $\P(Z\leq x)=o(e^{rx})$ as $x\to-\infty$, then there is some $r_1>0$ such that 
\begin{align}\label{Hleft}
H(x)=o(e^{r_1 x})\qquad\mbox{ as }x\to-\infty.
\end{align}
More accurate asymptotics of $H(x)$ as $x\to-\infty$ is given in \cite{BK16}. If $Z$ has finite second moment, for some $r>0$, $\P(Z>x)=o(e^{-r x})$ as $x\to\infty$ and the distribution of $Z$ is strongly non-lattice, then there is  $r_1>0$ such that (see \cite{Stone65})
\begin{align}\label{Hright}
H(x)=\frac{x}{\E Z}+\frac{\E Z^2}{2(\E Z)^2}+o(e^{-r_1 x})\qquad \mbox{ as }x\to\infty.
\end{align}

\subsection{Renewal measure with extra regularity}\label{extrareg} For the second order asymptotics we need a better control of 
$\H\left((x,x+h]\right)$ in terms of $h$ than \eqref{Hineq1}; something in the spirit of 
\begin{align}\label{Hfrost}
\H\left((x,x+h]\right)\leq c\, h^\beta, \qquad x\geq0,h>0
\end{align}
for some $\beta>0$.
Observe that with $C_n=\sup_x\H((x,x+1/n])<\infty$ we have
$$\H\left((x,x+h]\right)\leq C_n \frac{\left\lceil n h\right\rceil}{n}$$
thus \eqref{Hfrost} holds for all $x$ and $h>1/n$ with $\beta=1$. 
Hence, we have to investigate the case of small $h$ only.
We have the following statement.
\begin{lemma}\label{FROST}
	Assume that $\P(Z>x)=o(e^{-r x})$ as $x\to\infty$ for some $r>0$, $\E Z_-^2<\infty$ and that the law of $Z$ is strongly non-lattice.
	If there exists $\beta>0$ such that
	\begin{align}\label{cond}
	\limsup_{h\to 0^+}\sup_{a\geq 0}h^{-\beta}\P(a<Z\leq a+h)<\infty,
	\end{align}
	then there exists $\tilde\beta>0$ and $c>0$ such that for $x\geq 0$ and $h\geq0$, 
	\begin{align}\label{Hfrostp} \H\left((x,x+h]\right)\leq c \max \{ h^{\tilde\beta}, h\}.\end{align}
\end{lemma}
\begin{remark}
Notice that \eqref{cond} is satisfied when the law of $Z$ has density in $L^p$ for some $1< p \leq \infty$. 
\end{remark}
Before we write the proof let us describe a certain factorization of $\H$ that will be used in it. 
In renewal theory it is usually easier to consider first a non-negative $Z$, and then to extend some argument to arbitrary $Z$ using the following approach (see e.g. proof of Lemma \ref{FROST}).
Let \mbox{$N=\inf\{ n\in\N \colon S_n>0\}$} be the first ladder epoch of $(S_n)_{n\geq1}$. We define a measure by 
\[\V(B):=\E\left(\sum_{n=0}^{N-1} \Ind{S_n\in B}\right),\qquad B\in\mathcal{B}(\R).\]
The support of $\V$ is contained in $(-\infty,0]$ and $\V(\R)=\E N$. Since $(S_n)_{n\geq 1}$ has a positive drift, $\E N$ is finite.
Let $Z_1^>\stackrel{d}{=}S_N$ be the first ladder height of $(S_n)_{n\geq 1}$ and consider an i.i.d. sequence $(Z_n^>)_{n\geq 1}$.
Then
\[\H=\V\ast \H^{>},\]
where $\H^>$ is the renewal measure of $(S_n^>)_{n\geq1}$ and $S_n^>=\sum_{k=1}^n Z_k^{>}$  (\cite[Theorem 2]{Black53}, see also \cite[Lemma 2.63]{Als} for more general formulation).  

\begin{proof}[Proof of Lemma~\ref{FROST}]
We will first consider the case when $Z\geq 0$ a.s. Let $F$ be the cumulative distribution function of $Z$.
From condition \eqref{cond} we infer that there exists $\beta,c,\varepsilon>0$
	such that for any $a\geq0$ and any $h\in(0,\eps]$ one has $F(a+h)-F(a)=\P(a<Z\leq a+h)\leq c h^\beta$. Decreasing $\eps$, if needed, we can and do assume that $F(\eps)<1$.
Since $H(x)=\Ind{x\geq 0}+H\ast F(x)$ we have for any $x\geq 0$ and $h\in(0,\eps]$,
\begin{align*}
\H\left((x,x+h]\right)&=\int_{[0,x]}\left(F(x-z+h)-F(x-z)\right)\H(\dd z)+\int_{(x,x+h]} F(x+h-z)\H(\dd z) \\
& \leq c h^\beta \H([0,x])+F(h) \H\left((x,x+h]\right)
\end{align*}
and thus 
\[
\H\left((x,x+h]\right)\leq (1-F(\eps))^{-1} c\, h^\beta \H([0,x]).
\]

Let now $Z$ be arbitrary and let $S_N$ be the first ladder height of $(S_n)_{n\geq1}$. Since $\E N<\infty$ and, for $a\geq 0$ and small enough $h$,
\begin{align*}
\P(a<S_N\leq a+h)=\sum_{n=1}^\infty \P(a<S_n\leq a+h, S_1\leq 0,\ldots,S_{n-1}\leq 0, S_n>0)  \\
 = \sum_{n=1}^\infty \P(a-S_{n-1}<Z_n\leq a-S_{n-1}+h, N\geq n) \leq c h^\beta\sum_{n=1}^\infty\P(N\geq n),
\end{align*}
by \eqref{cond} and it follows that
\[\limsup_{h\to 0^+}\sup_{a\geq 0}h^{-\beta}\P(a<S_N\leq a+h)<\infty.\]
Thus, using factorization $\H=\V\ast \H^>$ we obtain for $x\geq 0$ and $h\in(0,\eps]$,
\[\H\left((x,x+h]\right)=\int_{(-\infty,0]} \H^>((x-t,x-t+h]) \V(\dd t) \leq c h^\beta \int_{(-\infty,0]} \H^>([0,x-t]) \V(\dd t)=c h^\beta H(x).\]
For $0\leq x\leq h^{-\delta}$ with $\delta<\beta$ this implies that
$$\H\left((x,x+h]\right)\leq C h^\beta(1+x)\leq \tilde{C} h^{\beta-\delta}.$$
On the other hand, for $x> h^{-\delta}$ and $r>0$ we have
$$e^{-rx}\leq e^{-rh^{-\delta}}\leq r^{-1} h^\delta,$$
where we have used the fact that $x \exp(-x)<1$ for $x>0$.
The conclusion follows by \eqref{Hright}, since then
$$\H\left((x,x+h]\right)=\frac{h}{\E Z}+o(e^{-rx}).$$
\end{proof}

\section{Renewal Theorem}\label{reneval}
A function $f\colon\R\to\R_+$ is called \emph{directly Riemann integrable} on $\R$ (dRi) if for any $h>0$,
\begin{align}\label{dRi}
\sum_{n\in\Z}\sup_{(n-1)h\leq y<nh}f(y)<\infty
\end{align}
and
$$\lim_{h\to0^+}h\cdot\left(\sum_{n\in\Z}\sup_{(n-1)h\leq y<nh}f(y)-\sum_{n\in\Z}\inf_{(n-1)h\leq y<nh}f(y) \right)=0.$$
If $f$ is locally bounded and a.e. continuous on $\R$, then an elementary calculation shows that \eqref{dRi} with $h=1$ implies direct Riemann integrability of $f$.
For directly Riemann integrable function $f$, we have the following {\it Key Renewal Theorem} (\cite{AMN78}):
$$\int_{\R} f(x-z)\H(\dd z)\to\frac{1}{\E Z}\int_{\R} f(t)\dd t.$$
There are many variants of this theorem, when $f$ is not necessarily $L_1$ - see \cite[Section 6.2.3]{Iks}. Such results are usually obtained under the additional requirement that $f$ is (ultimately) monotone or $f$ is asymptotically equivalent to a monotone function. 

Neither of them is sufficient for us. To prove Theorem \ref{perpet} we need to integrate the function $e^{\a x}\E g(e^{-x}B)$ with respect to $\H$, where $g$ is  $C^1$ function ``approximating'' $\Ind{(1,\infty)}$. Therefore, we prove the following result.

\begin{theorem}\label{LTH}
	Assume that $0<\E Z<\infty$, the law of $Z$ is non-arithmetic and $\P(Z\leq x)=o(e^{rx})$ as $x\to-\infty$. Assume further that there is a random variable $B$ and a slowly varying function $L$ such that $\P(B>x)=x^{-\alpha}L(x)$.
    Let	$g$ be a bounded function,  $\supp\,g\subset [1,\infty)$ and there exists a constant $c$ such that
    	\begin{equation}\label{gcond}
    	\Big | \frac{\dd}{\dd t}\big (e^{-\a t} g(e^{t})\big )\Big |\leq ce^{-\a t},\quad t>0.
    	\end{equation} 
    
	Then 
	\begin{align}\label{eq1}\lim _{x\to \infty}\widetilde L(e^x)^{-1}\int _{\R }e^{\a (x-z)}\E g(e^{z-x}B)\H(\dd z)=	\a (\E Z)^{-1}\int _{[1,\infty)}g(r)r^{-\a -1}\dd r.\end{align}
Assume additionally that $\E \exp(\eps Z)<\infty$ for some $\eps>0$ and that the law of $Z$ is strongly non-lattice. Then as $x\to\infty$,
	\begin{align}\label{eq0}
	\left |\int _{\R }e^{\a (x-z)}\E g(e^{z-x}B)\H(\dd z)-\a \widetilde L(e^x)\int _{[1,\infty)}g(r)r^{-\a -1}\dd r\right |\leq CL(e^x),\end{align}
where $C $ depends on $\| g\| _{sup}  \sup_{x\in \R}|g(x)|$ and the constant $c$ in \eqref{gcond}.  $C\to \infty $ if either $\| g\| _{sup}\to \infty$ or $c\to \infty$.
\end{theorem}
To obtain asymptotics of $\P (R>e^x)$ one {may} integrate $e^{\alpha x}\P (B>e^x)$ with respect to $\H$ {and control other components} as it is explained in the proof of Theorem \ref{function}. However, using $C^1$ functions $g$ instead of {$\Ind{(1,\infty)}(x)$} allows us to {avoid many technical obstacles, without requiring stronger regularity of $\H$}.
Basically we need $g$ as defined in \eqref{defg} i.e. approximating $\Ind{(1,\infty)}(x)$.
Observe that when $g(x)$ is replaced  by $\Ind{(1,\infty)}(x)$ we obtain $e^{\a (x-z)}\E \Ind{(1,\infty)}(e^{z-x}B) = L(e^{x-z})$ and so Theorem 
\ref{LTH} is in analogy to Theorems 3.1 and 3.3 in  \cite{DK18} which say that
\begin{align*}\int_{\R} L\left(e^{x-z}\right)\H(\dd z)\sim\frac{1}{\E Z}\widetilde{L}\left(e^{x}\right)
\end{align*}
or with more regularity on $Z$,
\begin{align*}\int_{\R} L\left(e^{x-z}\right)\H(\dd z)=\frac{1}{\E Z}\widetilde{L}\left(e^{x}\right)+O\left(L(e^x)\right).\end{align*}
The proof of Theorem \ref{LTH} is postponed to the last section.

\section{Perpetuities}

\subsection{First order asymptotics}\label{taila}
In this section we prove Theorem \ref{perpet}. 
The assumptions are the same as in \cite[Theorem 4.2 {\rm(i)}]{DK18}, where the extremal recursion was considered. The proof, however, is not that simple. 
Therefore, we use a different approach, introduced in \cite{BDGHU2009}. 
Instead of proving directly the asymptotics of $\P(R>x)$ we look for the asymptotics of $\E g(R/x)$, where $g$ is a $C^1$ function and $\supp\,g\subset [1,\infty)$. The advantage of such approach is that certain function is easily shown to be dRi (see Proposition \ref{der}). Moreover, the asymptotics of $\P(R>x)$ follows straightforward from the asymptotics of $\E g(R/x)$ and the whole proof is quite simple.

Theorem \ref{perpet} is an immediate consequence of \eqref{lim} below.
\begin{theorem}\label{function}
	Suppose that 
conditions {\rm(A-1)}, {\rm(A-2)}, {\rm(B-1)}, {\rm(AB-1)} are satisfied. 
Let $g$ be a bounded function supported in $[1,\infty )$. Suppose that \eqref{gcond} holds. Then 
	\begin{equation}\label{lim}
	\lim_{x\to \infty }\frac{x^{\a } \E g(x^{-1}R)}{\widetilde{L}(x)}= \frac{\a}{\rho}\int _{[1,\infty)}g(r)r^{-\a -1}\dd r.
	\end{equation}
Moreover, as $x\to\infty$
\begin{equation}\label{gsecond}
\left |x^{\a } \E g(x^{-1}R) - \frac{\a}{\rho}\widetilde{L}(x)\int _{[1,\infty)}g(r)r^{-\a -1}\dd r \right |\leq C\max \{1,L(x)\}, 
\end{equation}
where $C$ depends on $\| g\| _{sup}$ and $c$ in \eqref{gcond}.
\end{theorem}
Using H\"older {continuous} or $C^1$ functions approximating indicators instead of indicators themselves is a standard procedure which usually allows to reduce regularity {requirement for} the probability distribution in question. By regularity we mean here assumptions similar to \eqref{condA} or even existence of density. They seem to be needed, if indicators are used, but with H\"older continuous functions one can handle calculations differently. In various problems this approach is very successful. 

Although we use regularity of functions in intermediate steps, what we obtain at the end allows us to take the limit and to eliminate the dependence on H\"older constants or derivatives, see e.g Sections 3.1, 3.2 or Appendix D in \cite{BDM2016}. This can be done in \eqref{lim} because the right hand side depends only on the integral of $g$. However this is not the case in \eqref{gsecond} because $C\to \infty $ if $\| g'\| _{sup}\to \infty $ which takes place when indicators are approximated by $C^1$ functions. Therefore, for the second order asymptotics we have to proceed differently.   
The problem is treated in the next Section.

Finally, $\alpha \widetilde{L}(x)$ in \eqref{lim} and \eqref{gsecond} may be replaced by $\E B_+^\alpha \Ind{B\leq x}$.  
As an easy consequence of \eqref{Lreg} we obtain
\begin{proposition}\label{Prop1}
Assume that the first condition in {\rm(B-1)} holds. Then, we have
\begin{align}\label{Bax}
\E B_+^\alpha \Ind{B\leq x}=\alpha \widetilde{L}(x)-L(x)\sim \alpha\widetilde{L}(x)
\end{align}
and for $r>0$, 
\begin{align*}
\E B_+^{\alpha+r}\Ind{B\leq x}&=(\alpha+r)\int_0^x t^{\alpha+r-1}\P(B>t)\dd t-x^{\alpha+r}\P(B>x)\sim {\frac{\alpha}{r}}x^r L(x).
\end{align*}

\noindent Assuming additionally that the second condition in {\rm(B-1)} holds we have  $\widetilde{L}(x)\uparrow\infty$ as $x\to\infty$.
\end{proposition}

\begin{proof}[Proof of Theorem \ref{perpet}]
	It is enough to prove that for a $\xi >1$
	\begin{equation}
	\lim _{x\to \infty }x^{\a}\widetilde L(x)^{-1}\P (R>x\xi )=\rho^{-1}\xi ^{-\a }.
	\end{equation}
	Let $\xi >1$ and $\eta >0$ be such that $\xi -\eta >1$. 
	Let $g_1$ be a $C^1$ function such that $0\leq g_1\leq 1$ and  
	\begin{equation}\label{defg}
	g_1(x)=\begin{cases} 0\ \mbox{if}\ x\leq \xi -\eta \\ 
	1\ \mbox{if}\ x\geq \xi  \end{cases},
	\end{equation}
	Let $g_2(x)=g_1(x-\eta)$. 
	
	Then $g_1, g_2$ satisfy the assumptions of Theorem \ref{function},
 because $g'_1(x)=g'_2(x)=0 $ for $x\leq \xi -\eta $ and $x\geq \xi +\eta$. We have
	\begin{multline*}
	I_2:=\lim _{x\to \infty }x^{\a}\widetilde L(x)^{-1}\E g_2(x^{-1}R)\\
	\leq \liminf _{x\to \infty }x^{\a}\widetilde L(x)^{-1}\P (R>x\xi )
	\leq \limsup _{x\to \infty }x^{\a}\widetilde L(x)^{-1}\P (R>x\xi )\\
	\leq \lim _{x\to \infty }x^{\a }\widetilde L(x)^{-1}\E g_1(x^{-1}R)=:I_1.
	\end{multline*}
	Moreover, 
	\begin{align*}
	|I_1-I_2|&\leq \frac{\a}{\rho} \int _0^{\infty }|g_1(r)-g_2(r)|r^{-\a -1}\dd r \\
	&\leq \frac{\a}{\rho} \int _{\xi -\eta }^{\xi +\eta }r^{-\a -1}\dd r
	\leq   2\a \eta/\rho .
	\end{align*}
Letting $\eta \to 0$ we infer that 
	\begin{equation*}
	\lim _{x\to \infty }x^{\a}\widetilde L(x)^{-1}\P (R>x\xi ) \quad \mbox{exists}. 
	\end{equation*}
Finally,	
	\begin{equation*}
	\left|\lim _{x\to \infty }x^{\a }\widetilde L(x)^{-1}\P (R>x\xi )-\rho^{-1}\xi ^{-\a}\right|\leq  
	\a \eta/\rho . 
	\end{equation*}
	Hence the conclusion follows.
\end{proof}

\begin{proof}[Proof of Theorem \ref{function}]
	The proof presented here follows very closely the proof of Theorem 4.2 in \cite{DK18}.
Let us denote
\[
f(x):=e^{\a x}\E g(e^{-x}R)
\]
and
\[
\psi(x):=e^{\a x}\E g(e^{-x}R)-e^{\a x}\E g(e^{-x}AR),
\]
where $A$ and $R$ are independent, $\supp\,g\subset [1,\infty)$.
Let us define the distribution of $Z$ by 
\begin{align}\label{defZ}
\P(Z\in \cdot)=\E A^{\alpha} \Ind{\log A\in \cdot}.
\end{align} 
Then, we have for any $x>0$,
\[
f(x)=\psi(x)+e^{\a x}\E g(e^{-x}AR)=\psi(x)+\E A^\a f(x-\log A)=\psi(x)+\E f(x-Z),
\]
Iterating the above equation (see page 8 in \cite{DK18}), one arrives at
\[
f(x)=\sum_{n=0}^\infty \E \psi(x-S_n)=\int_{\R} \psi(x-z)\H(\dd z),
\]
where $\H$ is the renewal measure of $(S_n)_{n\geq 1}$ and $S_n=Z_1+\ldots+Z_n$, where $(Z_i)_i$ are independent copies of $Z$ and $S_0=0$.
Let us define 
\[
\psi_B (x):=e^{\a x}\E g(e^{-x}B) \qquad\mbox{and}\qquad \psi_0(x):=\psi(x)-\psi_B(x).
\]
Let us note that \eqref{gcond} is equivalent to 
condition $|x g'(x)  - \alpha g(x)| \leq c$ for $x\geq1$. Thus, \eqref{gcond} along with boundedness of $g$ imply that $g'$ is also bounded and so $g$ is Lipschitz continuous.

By Proposition \ref{der}, $\psi_0$ is directly Riemann integrable and so
\begin{equation*}
\lim _{x\to\infty }\int_{\R}\psi_0(x-z)\H(\dd z)=\frac{1}{\E Z}\int_{\R }\psi_0(t) \dd t<\infty.
\end{equation*}
The main contribution to the asymptotics of $f$ comes from $\int _{\R }\psi_B(x-z)\H(\dd z)$. Observe that 
	\[
	\E Z = \E A^\a \log A = \rho
	\]
and that since the law of $Z$ have the same supports as $\log A$ given $A>0$, it is also non-arithmetic. Moreover
\[
\P(Z\leq x)= \E A^\a\Ind{\log A\leq x} \leq e^{\a x}\P(A\leq e^x) = o(e^{\a x})
\]
as $x\to\infty$.
By Theorem \ref{LTH} we obtain the assertion.
\end{proof}

In the next proposition we do not need to assume that $A\geq 0$ with probability $1$ nor that the law of $R$ is the solution of the equation $R\stackrel{d}{=}AR+B$.
 We require only that the moments of $|R|$ of order strictly smaller then $\alpha$ are  finite, which is satisfied in our framework{; see \cite[Lemma~2.3.1]{BDM2016}.}
 For $0<\eps\leq 1$, we define $H^{\eps}$ to be the set of bounded functions $g$ satisfying
 \begin{equation*}
 \|g\|_{\eps}=\sup _{x,y\in \R}\frac{|g(x)-g(y)|}{|x-y|^{\eps}}<\infty.
 \end{equation*}
Clearly, due to boundedness of $g$, $H^{\eps _1}\subset H^{\eps}$ if $\eps _1\leq \eps$.  
\begin{proposition}\label{der}
	Suppose that $A,B,R$ are real valued random variables and $(A,B)$ is independent of $R$. Fix	$0<\eps < \a$, $\eps \leq  1$ 
and assume further that $\E |A|^{\a }< \infty$,  $\E\left[|A|^{\eps}|B|^{\a -\eps }\right]<\infty$, $\E |R|^{\beta }<\infty $ for every $\beta <\a$. 
	Then for every $g\in H^{\eps}$ such that $0\notin \supp \ g$ the function
	\begin{equation*}
	\psi_0(x)=e^{\a x}\E \left[g(e^{-x}(AR+B))-g(e^{-x}AR)-g(e^{-x}B)\right]
	\end{equation*}
	is directly Riemann integrable. 
\end{proposition}
\begin{proof}
	Since $\psi_0$ is continuous it is enough to prove that
	\begin{equation}
	\sum _{n\in \Z}\sup_{n\leq x<n+1} |\psi_0 (x)|< \infty .
	\end{equation}
	For $x,y\in \R $ we have
	\begin{align*}
	|g(x+y)-g(x)-g(y)|&\leq |g(x+y)-g(x)|+ |g(y)-g(0)|\leq 2\|g\|_{\eps} |y|^{\eps}. 
	\end{align*}
	Interchanging the roles of $x$ and $y$, we arrive at
	\begin{equation*}
	|g(x+y)-g(x)-g(y)|\leq 2\|g\|_{\eps}\min \{ |x|, |y|\}^\eps \Ind{\max\{|x|,|y|\}>\eta/2},
	\end{equation*}
	where $\supp\,g\subset \{ x: |x|>\eta \}$. 
Thus, for any $x\in\R$
	\begin{equation*}
	|\psi_0 (x)|\leq 2\|g\|_{\eps} e^{(\a-\eps) x} \E\left[ \min\{|B|,|AR|\}^\eps\Ind{\max\{|B|,|AR|\}>e^{x}\eta/2}\right].
	\end{equation*}
	Since $\a-\eps >0$, we have
	\begin{equation*}
	\sup_{n\leq x<n+1 }|\psi_{0} (x)|\leq 2\|g\|_{\eps} e^{(\a -\eps)(n+1)}\E\left[ \min\{|B|,|AR|\}^\eps\Ind{\max\{|B|,|AR|\}> e^n\eta }\right] 
	\end{equation*}
	and
	\begin{equation*}
	\sum_{n\in\Z}\sup_{n\leq x<n+1} |\psi_{0}(x)|\leq 2\|g\|_{\eps}\E\left[ \sum _{n=-\infty}^{n_0} e^{(\a -\eps)(n+1)}\min\{|B|,|AR|\}^\eps\right],
	\end{equation*}
	where
	\begin{equation*}
	n_0 = \left\lfloor \log\left(2\max\{|B|,|AR|\}\slash \eta\right)\right\rfloor.
	\end{equation*}
	Hence, there is a constant $C=C(\eta , \a , \eps,\|g\|_{\eps})$ such that
	\begin{align}\label{RHSX}
	\sum_{n\in\Z}\sup_{n\leq x<n+1}|\psi_{0} (x)|&\leq C\,\E\left[ \max\{|B|,|AR|\}^{\alpha-\eps}\min\{|B|,|AR|\}^{\eps}\right].
	\end{align}
	Let us first consider the case when $\alpha-2\eps> 0$. We have
	\[
	\E\left[ \max\{|B|,|AR|\}^{\alpha-\eps}\min\{|B|,|AR|\}^{\eps}\right] \leq 
	 \E\left[|B|^{\alpha-\eps}|AR|^\eps\right)] +\E\left[|AR|^{\alpha-\eps}|B|^\eps\right]. 
	\]
Since $R$ and $(A,B)$ are independent, the first term above is finite by  assumption. 
For the second term, we have
\begin{align*}
\E|R|^{\alpha-\eps}\E\left[ |A|^{\a -\eps }|B|^{\eps}\right] & =	\E|R|^{\alpha-\eps}\E\left[ |A|^\eps |A|^{\a -2\eps }|B|^{\eps}\left(\Ind{|B|\leq|A|}+\Ind{|B|>|A|}\right)\right]\\
& \leq \E|R|^{\alpha-\eps} \left(\E |A|^{\a }+\E |A|^{\eps }|B|^{\a -\eps }\right),
\end{align*}
where we have used $|A|^{\alpha-2\eps}\Ind{|B|>|A|}\leq |B|^{\alpha-2\eps}$.
On the other hand, if $\alpha-2\eps\leq0$, then we have $0<\alpha-\eps\leq\eps$ and the right hand side of \eqref{RHSX} up to a multiplicative constant $C$ is equal to
\begin{align*}
\E[ |AR|^{\eps} |B|^{\alpha-\eps}   \Ind{|B|>|AR|}]+\E[ |AR|^{\alpha-\eps} |B|^{\a-\eps} |B|^{2\eps-\a} \Ind{|B|\leq|AR|}].
\end{align*}
It is clear that both terms are finite; for the second use $|B|^{2\eps-\a} \Ind{|B|\leq|AR|}\leq |AR|^{2\eps-\a}$.
\end{proof}

\subsection{Perpetuity - second order asymptotics}\label{second}
In this section we study the second order asymptotics i.e the size of 
\[
 | x^\alpha\P(R>x)-\widetilde{L}(x)\slash \rho|
\]
when $x\to \infty $. 
For that we need more stringent assumptions on the distribution of $A$. Recall that $\widetilde{L}(x)/L(x)\to\infty$ as $x\to\infty$.
\begin{theorem}\label{perp2}
	Assume {\rm(A-1)}, {\rm(A-2)}, {\rm(B-1)}. Suppose further that there exists $\beta>0$ such that
	\begin{align}\label{condA}
	\limsup_{h\to 0^+}\sup_{a\in\R}h^{-\beta}\P(a<\log A\leq a+h)<\infty
	\end{align}
	and $\E A^\gamma<\infty$ for some $\gamma>\alpha +\alpha^2/\beta$. If the distribution of $Z$ defined by \eqref{defZ} is strongly non-lattice,
	then as $x\to\infty$,
	\begin{equation}\label{secondorder}
x^\alpha\P(R>x)=\frac{\widetilde{L}(x)}{\rho}+\frac{\E\left((AR+B)_{+}^\alpha -(AR)_{+}^\alpha-B_{+}^\alpha\right)}{\a\rho} +O(L(x))+o(1),\end{equation}
where $a_+=\max \{a,0\}$.
\end{theorem}
\begin{remark}
Depending on $L$ either the constant or $O(L(x))$ may dominate in \eqref{secondorder}. If $L(x)$ is asymptotically bounded away from zero, then  \eqref{secondorder} says that 
\[
 x^\alpha\P(R>x)=\rho^{-1}\widetilde{L}(x) + O(L(x))
\]
when $x\to \infty$.

If $L(x)\to 0$ then \eqref{secondorder} is more precise and it implies
\[
\lim_{x\to\infty} \left( x^\alpha\P(R>x)-\rho^{-1}\widetilde{L}(x)\right) = 
	\frac{\E\left((AR+B)_{+}^\alpha-(AR)_{+}^\alpha-B_{+}^\alpha\right)}{\a\rho}.
\]
\end{remark}

\begin{remark}
	In Theorem \ref{perp2} it is required that the law of $Z$ is strongly non-lattice, but it is somehow desirable to have a sufficient condition in terms of the distribution of $\log A$. It is enough to assume that the law of $\log A$ is spread-out, i.e. for some $n$ its
$n$-th convolution
has a non-zero absolutely continuous part for some $n\in\mathbb{N}$. 
If	the law of $\log A$ is spread-out then the law of $Z$ is spread-out as well. This in turn implies that the distribution of $Z$ is strongly non-lattice.
\end{remark}

We begin with the following technical Lemma.
\begin{lemma}
Under assumptions of Theorem \ref{perp2}, 
both functions
$$I_1(x)=e^{\alpha x}\P(\max\{AR,B\}\leq e^x<AR+B)$$
and 
$$I_2(x)=e^{\alpha x}\P(AR+B\leq e^x<\max\{AR,B\})$$
are $O(L(e^x))$ as $x\to\infty$.
\end{lemma}
\begin{proof}
By assumption we have $\gamma>\alpha+\alpha^2/\beta$. Take $\delta $ such that
\begin{equation}\label{cons1}
\frac{\alpha }{\gamma }< \delta <1-\frac{\alpha ^2}{\gamma \beta }
\end{equation}
and $\gamma'\in(\alpha +\alpha^2/\beta,\gamma)$ such that 
\begin{equation}\label{cons2}
\frac{\alpha ^2}{\gamma '\beta }<1-\delta .
\end{equation}
Then, we have
\begin{align*}
I_1(x)&\leq e^{\alpha x}\left( 
\P(B>e^x/2)+\P(e^{\delta x}<B\leq e^x/2, AR+B>e^x)+\P(A>e^{\alpha x/\gamma'})\right. \\
&\left.+\P(B\leq e^{\delta x},A\leq e^{\alpha x/\gamma'},AR\leq e^x\leq AR+B)
\right)\\
&= K_1+K_2+K_3+K_4.
\end{align*}
It is clear that $K_1=O(L(e^x))$. Furthermore, taking $\eta $ such that
\begin{equation}\label{const3}
\frac{\alpha ^2}{\gamma + \alpha }<\eta<\frac{\alpha\delta}{1+\delta}
\end{equation}  we obtain
$$K_2\leq e^{\alpha x}\P(ARB>e^{(1+\delta)x}/2)\leq e^{\alpha x} 2^{\alpha-\eta}\frac{\E (ARB)_+^{\alpha-\eta}}{e^{(\alpha-\eta)(1+\delta)x}}=o(e^{-s x})$$
for some $s>0$. Indeed, $\E |ARB|^{\alpha-\eta}=(\E |AB|^{\alpha-\eta})(\E |R|^{\alpha-\eta})$ and
applying H\"older inequality with $p=\gamma \slash (\alpha -\eta )$ and $q=\gamma \slash (\gamma -\alpha +\eta )$ we obtain
\begin{equation*}
\E |AB|^{\alpha-\eta}\leq (\E A^{\gamma })^{1\slash \gamma }(\E |B| ^{(\alpha -\eta )q})^{1\slash q}
\end{equation*}
and $(\alpha -\eta )q<\alpha $ in view of \eqref{const3}.
 Moreover, since $1-\gamma/\gamma'<0$ we have
$$K_3\leq e^{\alpha x} \frac{\E A^{\gamma}}{e^{\alpha \gamma x/\gamma' }}=o(e^{-s x})$$
for some $s>0$ and so $K_2$ and $K_3$ are $O(L(e^x))$ as well.
For $K_4$ define $\lambda(x)=1-e^{-(1-\delta)x}\to 1$ and recall that $\alpha/\gamma'<1$. Then, by \eqref{condA},
\begin{align*}
K_4&\leq e^{\alpha x}\P(\lambda(x)e^x <AR\leq e^x, R>\lambda(x) e^{(1-\alpha/\gamma')x}) \\
& = e^{\alpha x} \P(x-\log R+\log\lambda(x)<\log A\leq x-\log R,R>\lambda(x) e^{(1-\alpha/\gamma')x}) \\
& \leq C e^{\alpha x} \left(-\log\lambda(x)\right)^\beta \P(R>\lambda(x) e^{(1-\alpha/\gamma')x}) \\
& \sim C e^{\alpha x} e^{-\beta(1-\delta)x} \frac{\widetilde L(\lambda(x) e^{(1-\alpha/\gamma')x})}{\lambda(x)^\alpha  e^{\alpha(1-\alpha/\gamma')x}},
\end{align*}
which is $O(e^{-s x})$ for some $s>0$ in view of \eqref{cons2}.

We proceed similarly for $I_2$ writing
\begin{align*}
I_2(x)\leq e^{\a x}\left( \P(B\geq e^x)+\P(AR>e^x,-B>e^{\delta x})+\P(A>e^{\a x/\gamma'})\right.\\
\left. + \P(-B\leq e^{\delta x}, A\leq e^{\a x/\gamma'}, AR+B\leq e^x<AR)\right).
\end{align*}
Then one can show that there exists $\delta>0$ small enough to ensure that $I_2(x)=O(L(e^x))$.

\end{proof}

\begin{proof}[Proof of Theorem \ref{perp2}]
We begin the proof in the same way as in the proof of Theorem \ref{function} (see also proof of \cite[Theorem 4.2]{DK18})
but with $f(x)=e^{\alpha x}\P(R>e^x)$,
$\psi(x)=e^{\alpha x}\left( \P(AR+B>e^x)-\P(AR>e^x)\right)$,
$\psi_B(x)=e^{\alpha x}\P(B>e^x)$. Then
\begin{equation}\label{fpsi}
f(x)=\int_{\R} \psi_B(x-z)\H(\dd z)+\int_{\R} \psi_0(x-z)\H(\dd z),\end{equation}
where 
\begin{align*}
\psi_0(x)&=e^{\alpha x}\left(\P(AR+B>e^x)-\P(AR>e^x)-\P(B>e^x)\right).
\end{align*}
In view of Theorem 3.3 in \cite{DK18} we know that 
$$\int_{\R} \psi_B(x-z)\H(\dd z)=\rho^{-1}\widetilde{L}(e^x)+O(L(e^x)).$$
Hence it remains to show that 
\[\int_{\R} \psi_0(x-z)\H(\dd z)=\rho^{-1}\int_{\R}\psi_0(t)\dd t+o(1)+O(L(e^x))
\]
as $x\to\infty$.
Let us denote
\begin{align*}
I_1(x)&=e^{\alpha x}\P(\max\{AR,B\}\leq e^x<AR+B) \\
I_2(x)&=e^{\alpha x}\P(AR+B\leq e^x<\max\{AR,B\}) \\
I_3(x)&=e^{\alpha x}\P(\min\{AR,B\}>e^x)
\end{align*}
so that 
\[\psi_0(x)=I_1(x)-I_2(x)-I_3(x).\]
In the proof of Theorem 4.2 in \cite{DK18} we have already shown (under weaker assumptions) that 
\[
\int_{\R}I_3(x-z)\H(\dd z)=\frac{\E\min\{AR,B\}_+^\alpha}{\alpha\rho}+o(1)
\]
and that $\E\min\{AR,B\}_+^\alpha<\infty$.
By the preceding Lemma we know that $I_i(x)=O(L(e^x))$ for $i=1,2$ and this implies that as $x\to\infty$,
$$\int_{(-\infty,0]} I_i(x-z)\H(\dd z)=O(L(e^x)),\qquad i=1,2.$$ 
Indeed,
consider $\int_{(-\infty,0]} \frac{L(e^{x-z})}{L(e^x)}\H(\dd z)$. For any $\delta>0$, the integrand is bounded by $c e^{-\delta z}$ for some $c>1$ by Potter's bound \eqref{potter}.
Combining this with \eqref{Hleft} and Lebesgue's Dominated Convergence Theorem we
conclude that 
\begin{align}\label{LH0}
\int_{(-\infty,0]} L(e^{x-z})\H(\dd z)\sim L(e^x)H(0).
\end{align}

Observe that there exists
 $\bt>0$ such that
	\begin{align}\label{cond1}
	\limsup_{h\to 0^+}\sup_{a\geq 0}h^{- \bt}\P(a<Z\leq a+h)<\infty .
	\end{align}
Indeed, let $p=\frac{\a +\eps }{\a } $, $q=\frac{\a +\eps}{\eps}$ with $\alpha+\eps\leq \gamma$. Then
	$$\P(a<Z\leq a+h)=\E A^{\a }\Ind {a<\log A \leq a+h}\leq \big (\E A^{\a +\eps }\big )^{1\slash p}\big (\P  (a<\log A \leq a+h)\big )^{1\slash q}.$$
Hence
	$$
h^{-\beta \slash q}\P(a<Z\leq a+h)\leq \big (\E A^{\a +\eps }\big )^{1\slash p}\big (h ^{-\beta }\P  (a<\log A \leq a+h)\big )^{1\slash q}
$$
and \eqref{cond1} follows by \eqref{condA}.
In view of \eqref{cond} we have the following easy result for $x>u$ and $d>u$,
\begin{align}\label{tech}\begin{split}
\int_{((x-d)_+,x-u]} e^{\alpha(x-z)}\H(\dd z)&\leq e^{\alpha d}\H((x-d)_+,x-u]) \\
& \leq c\,  e^{\alpha d} \max \{ (x-u-(x-d)_+)^{\ba}, x-u-(x-d)_+\}\\
& \leq c\,  e^{\alpha d} \max \{ (d-u)^{\ba}, d-u\} 
\end{split}\end{align}
for some $\ba>0$, where, 
the first inequality follows from monotonicity of the integrand and the second one by {Lemma \ref{FROST}}.

Moreover, notice that for $0<\lambda \leq 1$ and all $x>0$ one has
$\log(1+x)\leq \lambda^{-1} x^\lambda$.
Let us note that on the event $\{AR+B>\max\{AR,B\}\}$, both $AR$ and $B$ are positive and, on the space restricted to this event, random variables $U=\log\max\{AR,B\}$ and $D=\log(AR+B)$ are well defined.  Then, by \eqref{tech}
\begin{align*}
&\int_{(0,\infty)}I_1(x-z)\H(\dd z)=\E \int_{(0,\infty)} e^{\alpha(x-z)}\Ind{\max\{AR,B\}\leq e^{x-z}<AR+B}\H(\dd z) \\
& = \E \int_{(x-D,x-U]\cap(0,\infty)} e^{\alpha(x-z)}\H(\dd z)\Ind{D>U} \\
&\leq c\,\E (AR+B)^\alpha ( (D-U)^{\ba } + (D-U)) \Ind{D>U}.
\end{align*}
For the first term above we have
\begin{align*}
&c\,\E (AR+B)^\alpha  (D-U)^{\ba } \Ind{D>U}\\
& = c\,\E (AR+B)^\alpha \left(\log\left(1+\frac{\min\{AR,B\}}{\max\{AR,B\}}\right)\right)^{\ba} \Ind{AR+B>\max\{AR,B\}} \\
& \leq \frac{c}{\lambda^{\ba}} \E (AR+B)^\alpha\frac{\min\{AR,B\}^{\lambda \ba}}{\max\{AR,B\}^{\lambda \ba}} 
\Ind{AR+B>\max\{AR,B\}}\\
&\leq 2^\alpha \frac{c}{\lambda^{\ba}} \E \max\{AR,B\}^{\alpha-\lambda \ba} \min\{AR,B\}^{\lambda \ba}\Ind{AR+B>\max\{AR,B\}}\\
& \leq 2^\alpha \frac{c}{\lambda^{\ba}} \big (\E (AR)^{\alpha-\lambda \ba } B^{\lambda \ba}\Ind{\min\{AR,B\}=B>0} + \E B^{\alpha-\lambda \ba} (AR)^{\lambda\ba}\Ind{\min\{AR,B\}=AR>0}\big ) <\infty 
\end{align*}
provided $\ba \lambda <\a $. An analogous calculation shows that
$\E (AR+B)^\alpha  (D-U) \Ind{D>U}<\infty $ and so $\int_{(x-D,x-U]\cap(0,\infty)} e^{\alpha(x-z)}\H(\dd z)\Ind{D>U}$ is dominated  by an integrable random variable which does not depend on $x$.
Thus, by Lebesgue's Dominated Convergence Theorem we have
\begin{align*}
\lim_{x\to\infty} \int_{(0,\infty)}I_1(x-z)\H(\dd z)=\E \lim_{x\to\infty}\int_{(0,x-U]} e^{\alpha(x-z)}\Ind{x-z<D}\H(\dd z)\Ind{D>U}
\end{align*}
and for $d>u$ as $x\to\infty$,
$$e^{\a u}\int_{(0,x-u]} e^{\alpha(x-u-z)}\Ind{x-u-z<d-u}\H(\dd z)\to \rho^{-1} e^{\a u} \int_0^{\infty} e^{\alpha t} \Ind{t<d-u}\dd t,$$
where we have used the Key Renewal Theorem since  $x\mapsto e^{\alpha x}\Ind{[0,d-u)}$  is dRi (it has compact support, is bounded and a.e. continuous). Thus
$$\lim_{x\to\infty} \int_{(0,\infty)}I_1(x-z)\H(\dd z)=\frac{\E ((AR+B)^\a-\max\{AR,B\}^\a)\Ind{AR+B>\max\{AR,B\}}}{\alpha \rho}.$$

We proceed similarly with $I_2$. With $D=\log\max\{AR,B\}$ and $U=\log(AR+B)$ (analogously as before, $D$ and $U$ are well defined on the events $\{\max\{AR,B\}>0\}$ and $\{AR+B>0\}$, respectively), we have
\begin{align*}
&\int_{(0,\infty)} I_2(x-z)\H(\dd z)=\E \int_{(0,\infty)} e^{\alpha(x-z)}\Ind{AR+B\leq e^{x-z}<\max\{AR,B\}}\H(\dd z) \\
& \leq \E \int_{(x-D,x-U]\cap (0,\infty)}e^{\alpha(x-z)}\H(\dd z) \Ind{\max\{AR,B\}>AR+B\geq 2^{-1}\max\{AR,B\}>0}\\
&+ \E \int_{(x-D,\infty)\cap(0,\infty)}e^{\alpha(x-z)}\H(\dd z) \Ind{ AR+B\leq 2^{-1}\max\{AR,B\}, \max\{AR,B\}>0}.
\end{align*}
and by \eqref{tech}
\begin{multline*}
\E \int_{(x-D,x-U]\cap(0,\infty)}e^{\alpha(x-z)}\H(\dd z) \Ind{\max\{AR,B\}>AR+B\geq 2^{-1}\max\{AR,B\}>0}\\
\leq \E \max\{AR,B\}^\alpha \Big (\left(\log\max\{AR,B\}-\log(AR+B)\right)^{\ba} 
\\+ \left(\log\max\{AR,B\}-\log(AR+B)\right)\Big)\Ind{\max\{AR,B\}>AR+B\geq 2^{-1}\max\{AR,B\}>0}.
\end{multline*}
Again, as before we do calculations for the term with $\ba$. It is bounded by 
\begin{align*}
&c\, \E \max\{AR,B\}^\alpha \left(\log \big (1+\frac{-\min\{AR,B\}}{AR+B}\big )\right)^{\ba} \Ind{\max\{AR,B\}>AR+B\geq 2^{-1}\max\{AR,B\}>0}\\
&\leq\frac{c}{\lambda^{\ba}} \E \max\{AR,B\}^\alpha \left(\frac{-\min\{AR,B\}}{AR+B}\right)^{\lambda\ba} \Ind{\max\{AR,B\}>AR+B\geq 2^{-1}\max\{AR,B\}>0}\\
&\leq 2^{\lambda\beta}\frac{c}{\lambda^{\ba}} \E \max\{AR,B\}^\alpha \left(\frac{|\min\{AR,B\}|}{\max \{AR,B\}}\right)^{\lambda \ba} \Ind{\max\{AR,B\}>AR+B\geq 2^{-1}\max\{AR,B\}>0}\\
&\leq2^{\lambda\beta}\frac{c}{\lambda^{\ba}} \E \max\{AR,B\}^{\alpha - \lambda\ba } |\min\{AR,B\}|^{\lambda \ba} <\infty
\end{align*}
as before. 
The second term equals
\begin{align*}
&\alpha\,\E \max\{AR,B\}^\alpha \int_0^\infty e^{-\alpha t}\H((x-D,x-D+t])\dd t\,\Ind{AR+B\leq 2^{-1}\max\{AR,B\},\max\{AR,B\}>0 }\\
&\leq c\,\E \max\{AR,B\}_+^\alpha \Ind{AR+B\leq 2^{-1}\max\{AR,B\},\max\{AR,B\}>0 }.
\end{align*}
Now, since $\min \{ AR,B\}\leq 0$ and
$$
AR+B=\max\{AR,B\}+\min \{ AR,B\}\leq \frac{1}{2}\max\{AR,B\}$$
we have
$$|\min \{ AR,B\}|\geq \frac{1}{2}\max\{AR,B\}$$
and
\begin{align*}
\E \max\{AR,B\}^\alpha \Ind{|\min \{ AR,B\}|\geq 2^{-1}\max\{AR,B\}>0}&\leq \E B^\alpha \Ind{B>0, AR<0, 1\leq 2\frac{|A R|}{B}}+\E (AR)^\alpha \Ind{AR>0, B<0, 1\leq 2\frac{|B|}{A R}}\\
&\leq 2^{\eta }\left(\E |B|^\alpha \left(\frac{|A R|}{|B|}\right)^{\eta} +\E |AR|^\alpha \left(\frac{|B|}{|A R|}\right)^{\eta }\right)\\
&\leq 2^{\eta }\left(\E |R|^{\eta } 
\E |B|^{\alpha-\eta }A^\eta 
+\E|R|^{\a-\eta} \E A^{\alpha-\eta}|B|^\eta \right)<\infty. 
\end{align*}
Similarly as before, Lebesgue's Dominated Convergence Theorem implies that as $x\to\infty$,
$$\int_{(0,\infty)}I_2(x-z)\H(\dd z)\to \frac{\E(\max\{AR,B\}^\alpha-(AR+B)_+^\a)_+}{\alpha\rho}$$
and so as $x\to\infty$, after straightforward simplification,
$$\int_\R \psi_0(x-z)\H(\dd z)= \frac{1}{\a\rho}  \E\left( (AR+B)_+^\alpha-(AR)_+^\alpha-B_+^\alpha\right)+O(L(e^x))+o(1).$$

\end{proof}

\section{Perpetuities with general $A$}\label{general}
Now we are going to consider perpetuities with $A$ attaining negative values as well. More precisely, we assume that $\P (A<0)>0$, possibly with
$\P (A\leq 0)=1$. Our aim is to reduce the general case to the one already solved: non-negative $A$. We propose a unified approach to perpetuities, which applies beyond our particular assumptions.

Assume that $\E\log|A|<0$ and $\E\log^+ |B|<\infty$. Then the stochastic equation $R\stackrel{d}{=}AR+B$ with $(A,B)$ and $R$ independent has a unique solution, or equivalently, that $R_n=A_nR_{n-1}+B_n$, $n\geq 1$, converges in distribution to $R$ for any $R_0$ independent of $(A_n,B_n)_{n\geq 1}$, where  $(A_n,B_n)_{n\geq 1}$ is a sequence of independent copies of the pair $(A,B)$. 

Define the filtration $\mathbb{F}=\{\mathcal{F}_n\colon n\geq 1\}$, where $\mathcal{F}_n=\sigma((A_k,B_k)_{k=1}^n)$. 
Following \cite[Lemma 1.2]{ver79}, for any 
stopping time $N$ (with respect to $\mathbb{F}$) which is finite with probability one, $R$ satisfies 
\begin{equation}\label{newrec}
R\stackrel{d}{=}A_1\ldots A_N R+R_N^\ast,\qquad \mbox{ $R$ and $(A_1\ldots A_N,R_N^\ast)$ are independent},\end{equation}
where $R_n^\ast=B_1+A_1 B_2+\ldots+A_1\ldots A_{n-1}B_n$ for $n\geq 1$. 
For $n\geq 1$ we write $\Pi_n=A_1\cdot\ldots\cdot A_n$ and $\Pi_0=1$.
Let $N:=\inf\{n\in\mathbb{N}\colon \Pi_n\geq0\}$. Then, $N$ is a stopping time with respect to $\mathbb{F}$ and $N$ is finite with probability $1$. Indeed, if $\P (A\leq 0)=1 $ then $N=2$. If $\P (A>0)>0$ then $N=\infty $ if and only if $A_1<0$ and for every $n\geq 2$, $A_n>0$ which means that for every $n$
\begin{equation}
\P (N=\infty )\leq \P (A<0)\P (A>0)^{n-1}\to 0, \quad \mbox{as } n\to \infty.
\end{equation}
Let now $\P (A<0)>0$ and $A_+=A\Ind {A\geq 0}$, $A_-=-A\Ind {A< 0}$.

Since $\{N\geq k\}=\{A_1<0,A_2>0,\ldots,A_{k-1}>0\}$ for $k\geq 2$ we have 
$$R_N^\ast=\sum_{k=1}^\infty \Ind{N\geq k}\Pi_{k-1}B_k =B_1-(A_1)_{-}\left(\sum_{k=2}^\infty (A_2)_+\cdots (A_{k-1})_+B_k\right).$$
Let us denote the expression in brackets by $S$. Then, $S$ is independent of $((A_1)_-,B_1)$ and it is the unique solution to
	\begin{align}\label{perpS}
	S\stackrel{d}{=}A_+S+B,\qquad \mbox{where}\ S\mbox{ and }(A_+,B)\mbox{ are independent}.
	\end{align}
Summing up, we obtain
\begin{lemma}\label{LemSigned}
	Assume that $\P(A<0)>0$ with $\E\log|A|<0$ and $\E\log^+|B|<\infty$. Let $R$ be the solution to
	$$R\stackrel{d}{=}AR+B,\qquad  \mbox{$R\mbox{ and }(A,B)$ are independent}.$$
	Then $R$ is also a solution to \eqref{newrec}, where 
$$R_N^\ast\stackrel{d}{=}(-A_-)S+B,\qquad \mbox{$S\mbox{ and }(A_-,B)$ are independent}$$
and $S$ satisfies \eqref{perpS}. 
\end{lemma}
Thanks to the above lemma, we can reduce the case of signed $A$ to the case on non-negative $A$. The properties of $\Pi_N$ and $R_N^\ast$ will be inherited by the properties of the original $(A,B)$.

The main result of this section is
\begin{theorem}\label{signedA}
Suppose that
\begin{itemize}
	\item[\rm (sA-1)] $\P (A<0)>0$, $\E\log |A|<0$,
	\item[\rm (sA-2)] there exists $\alpha>0$ such that $\E |A|^{\alpha}=1$, $\rho=\E |A|^{\alpha}\log |A|<\infty$,
	\item[\rm (sA-3)] the distribution of $\log |A|$ given $|A|>0$ is non-arithmetic,
   \item[\rm (sA-4)] there exists $\eps >0$ such that $\E |A|^{\a +\eps } <\infty$, 
    \item[\rm (sB-1)] $$\P(B>t)\sim p\, t^{-\alpha}L(t),\qquad \P(B<-t)\sim q\, t^{-\alpha}L(t), \qquad p+q=1,$$
    \item[\rm (sB-2)] $\E |B|^\alpha=\infty$.
\end{itemize}
Then 
\begin{equation}
x^{\a }\P (R>x)\sim \frac{\widetilde L(x)}{2\rho },\quad x^{\a }\P (R<-x)\sim \frac{\widetilde L(x)}{2\rho }.
\end{equation}
\end{theorem}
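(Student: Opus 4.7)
I will reduce the signed-$A$ case to the non-negative one by running the H\"older-test-function technique of Theorem \ref{function} on test functions of definite parity. Fix $\tilde g\in H^{\varepsilon}$ with $\supp\tilde g\subset[1,\infty)$ and consider its even and odd extensions $g_{\rm e}(y)=\tilde g(|y|)$ and $g_{\rm o}(y)=\mathrm{sign}(y)\,\tilde g(|y|)$; both belong to $H^{\varepsilon}$ and have support bounded away from $0$. Put $f_\star(x)=e^{\alpha x}\E g_\star(e^{-x}R)$, $\star\in\{{\rm e},{\rm o}\}$. The approximation argument from the proof of Theorem \ref{mthm} (with $\tilde g\to\Ind{[1,\infty)}$) shows $f_{\rm e}(x)+o(1)=e^{\alpha x}\P(|R|>e^x)$ and $f_{\rm o}(x)+o(1)=e^{\alpha x}[\P(R>e^x)-\P(R<-e^x)]$, so the theorem reduces to controlling $f_{\rm e}$ and $f_{\rm o}$.

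For $f_{\rm e}$, evenness gives $g_{\rm e}(AR)=\tilde g(|A||R|)$, and averaging with bias $|A|^\alpha$ yields the clean renewal equation
$$f_{\rm e}(x)=\psi_{\rm e}(x)+e^{\alpha x}\E g_{\rm e}(e^{-x}B)+\E f_{\rm e}(x-Z),$$
with $Z=\log|A|$ biased by $|A|^\alpha$, the same $Z$ as in the non-negative theory. Proposition \ref{holder} applies to $g_{\rm e}$ on $\R$ and shows $\psi_{\rm e}$ is dRi; the required mixed moment $\E|B|^{\alpha-\eta}|A|^\eta<\infty$ follows from (sA-4) together with (sB-1) via H\"older. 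Since $p+q=1$ one has $e^{\alpha x}\E g_{\rm e}(e^{-x}B)=e^{\alpha x}\E\tilde g(e^{-x}|B|)$, which is the leading term in the proof of Theorem \ref{function} applied with $|B|$ in place of $B$. Theorem \ref{LTH} and the Key Renewal Theorem then give $f_{\rm e}(x)\sim\alpha\rho^{-1}\widetilde L(e^x)\int_1^\infty\tilde g(r)\,r^{-\alpha-1}\,\dd r$, whence $x^\alpha\P(|R|>x)\sim\widetilde L(x)/\rho$.

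For $f_{\rm o}$, the identity $g_{\rm o}(AR)=\mathrm{sign}(A)\,\mathrm{sign}(R)\,\tilde g(|A||R|)$ produces the signed renewal equation
$$f_{\rm o}(x)=\psi_{\rm o}(x)+e^{\alpha x}\E g_{\rm o}(e^{-x}B)+(f_{\rm o}\ast\mu)(x),\quad\mu(\dd z)=\E[\mathrm{sign}(A)|A|^\alpha\Ind{\log|A|\in\dd z}].$$
By (sA-1), $p_\pm^\ast:=\E|A|^\alpha\Ind{\pm A>0}$ are both positive, so $\kappa:=\mu(\R)=p_+^\ast-p_-^\ast$ lies strictly in $(-1,1)$; in particular $K:=\sum_{k\ge0}\mu^{\ast k}$ is a (signed) measure of finite total mass $(1-\kappa)^{-1}$. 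A direct Karamata computation gives $e^{\alpha x}\E g_{\rm o}(e^{-x}B)\sim(p-q)L(e^x)\,\alpha\int_1^\infty\tilde g(r)\,r^{-\alpha-1}\,\dd r$, which is $O(L(e^x))=o(\widetilde L(e^x))$ by (sB-2); $\psi_{\rm o}$ is dRi by Proposition \ref{holder}. Using the finite mass of $K$ to prevent $\widetilde L$-accumulation, one concludes $f_{\rm o}(x)=O(L(e^x))=o(\widetilde L(e^x))$. Combining,
$$x^\alpha\P(R>x)=\tfrac12 e^{-\alpha x}\big(f_{\rm e}(x)+f_{\rm o}(x)\big)+o(1)\sim\frac{\widetilde L(x)}{2\rho}.$$

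The main obstacle is the bound $f_{\rm o}(x)=O(L(e^x))$: the naive majorization $|f_{\rm o}|\le(\text{input})\ast|K|$ fails because $|K|$ may have infinite total variation (it is only dominated by the renewal measure $H$), which would produce the spurious $\widetilde L$-order. The cleanest fix is to decompose $\mu=\kappa\nu+\mu_0$, where $\nu$ is the biased probability law of $\log|A|$ and $\mu_0$ is a signed measure of zero total mass, recast the equation as $f_{\rm o}=(\text{stuff})+\kappa f_{\rm o}\ast\nu+f_{\rm o}\ast\mu_0$, and bootstrap the bound using the contraction $|\kappa|<1$ together with the fact that convolution against a zero-mass signed measure cannot upgrade slowly varying functions to the $\widetilde L$-scale.
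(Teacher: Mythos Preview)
Your approach is genuinely different from the paper's and the even part is correct: using $g_{\rm e}(y)=\tilde g(|y|)$ you get the clean renewal equation driven by the biased law of $\log|A|$, and Proposition~\ref{holder} together with Theorem~\ref{LTH} indeed yields $x^\alpha\P(|R|>x)\sim\widetilde L(x)/\rho$. The paper, by contrast, never splits into parities; it introduces the ladder-type stopping time $N=\inf\{n:\Pi_n\ge0\}$, shows that $R\stackrel{d}{=}\Pi_N R+R_N^\ast$ with $\Pi_N\ge0$, $\E\Pi_N^\alpha=1$, $\E\Pi_N^\alpha\log\Pi_N=2\rho$, and $\P(R_N^\ast>t)\sim\P(|B|>t)$, and then applies Theorem~\ref{mthm} directly to the pair $(\Pi_N,R_N^\ast)$. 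This gives the factor $1/(2\rho)$ in one stroke and avoids any signed renewal equation.

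The odd part of your argument, however, has a real gap. First, assumption (sA-1) allows $\P(A\le0)=1$, in which case $p_+^\ast=0$ and $\kappa=-1$; your claim that $\kappa\in(-1,1)$ strictly is false in this case, and the geometric series $\sum_k\mu^{\ast k}$ does not even have finite mass. Second, and more seriously, even when $|\kappa|<1$ your proposed bootstrap is not justified. The decomposition $\mu=\kappa\nu+\mu_0$ gives $|\kappa|+\|\mu_0\|_{TV}=|p_+^\ast-p_-^\ast|+4p_+^\ast p_-^\ast$, which exceeds $1$ for instance when $p_+^\ast=3/4$; so the two pieces together do not contract on the scale $\widetilde L$. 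Your assertion that ``convolution against a zero-mass signed measure cannot upgrade slowly varying functions to the $\widetilde L$-scale'' would be true for a function \emph{asymptotic} to a multiple of $\widetilde L(e^x)$ (via the de~Haan relation $\widetilde L(e^{x-z})-\widetilde L(e^x)\sim -zL(e^x)$), but you only know $|f_{\rm o}|\le f_{\rm e}\sim c\,\widetilde L(e^x)$, which is far weaker; a bounded oscillating ratio $f_{\rm o}/\widetilde L(e^\cdot)$ is not killed by convolution with $\mu_0$. A limsup/liminf argument along these lines yields only $M=-m$ (symmetric oscillation), not $M=0$. The paper's stopping-time reduction sidesteps all of this: once $\Pi_N\ge0$, there is no signed kernel to invert.
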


The proof relies on Lemma \ref{LemSigned}. The tail asymptotics of $S$ follows from \cite{Gre94} as it is explained below in the proof of Theorem \ref{signedB}. 
In view of \eqref{newrec} to conclude Theorem \ref{signedA} it remains to 
prove that $\Pi _N$ and $R_N^{\ast}$ satisfy assumptions of Theorem \ref{perpet}.
First we will prove that $\Pi_N$ inherits its properties from $A$. The following result is strongly inspired by \cite[(9.11)-(9.13)]{Gol91} (see also \cite[Lemma 4.12]{Als}). For completeness, the proof is included below.
\begin{theorem}
	\begin{itemize}
		\item[{\rm(i)}] 	If the law of $\log |A|$ given $A\neq0$ is non-arithmetic (spread-out), then the law of $\log \Pi_N$ given $\Pi_N> 0$ is non-arithmetic (spread-out),
		\item[{\rm(ii)}] If $\E|A|^\alpha=1$ and $\E|A|^{\alpha+\eps}<\infty$ for some $\eps>0$ then
		there exists $\bar\eps>0$ such that $\E \Pi_N^{\alpha+\bar\eps}<\infty$,
		\item[{\rm(iii)}] If $\E|A|^\alpha=1$ then $\E \Pi_N^{\alpha}=1$ and $\E \Pi_N^{\alpha}\log\Pi_N=2 \E|A|\log|A|$.
	\end{itemize}
\end{theorem}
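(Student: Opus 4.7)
The plan is to first describe the distribution of $N$ explicitly. Writing $\epsilon_i := \mathrm{sign}(A_i)$, $p := \P(A > 0)$, $q := \P(A < 0)$, the event $\{N = n\}$ depends only on signs: $\P(N = 1) = p$ with pattern $(+)$, and for $n \geq 2$ the requirement $\Pi_k < 0$ for $k < n$ together with $\Pi_n > 0$ forces the pattern $(-,+,\ldots,+,-)$, giving $\P(N = n) = q^2 p^{n-2}$. Conditional on $\{N = n\}$ the signs are deterministic, so $|A_1|,\ldots,|A_N|$ are conditionally independent with $|A_i|$ distributed as $\mu_+$ (the law of $|A|$ given $A > 0$) or $\mu_-$ (the law given $A < 0$) according to position. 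This yields the mixture representation
\[
\nu := \mathrm{Law}(\log\Pi_N) \;=\; p\,\mu_+^{\log} \,+\, \sum_{n\geq 2} q^2 p^{n-2}\, (\mu_-^{\log})^{*2} * (\mu_+^{\log})^{*(n-2)},
\]
(where $\mu_\pm^{\log}$ is the pushforward of $\mu_\pm$ by $\log$), together with the moment formula
\[
f(\beta) := \E \Pi_N^\beta \;=\; p M_+(\beta) + \frac{q^2 M_-(\beta)^2}{1 - p M_+(\beta)}, \qquad M_\pm(\beta) := \E[\,|A|^\beta \mid A \gtrless 0\,],
\]
valid whenever $p M_+(\beta) < 1$ and $M_-(\beta) < \infty$.

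For (i), non-arithmetic case: I argue by contradiction. Suppose $\hat\nu(t_0) = 1$ for some $t_0 \neq 0$. Since $\nu$ is a convex combination of probability laws and $1$ is an extreme point of $\{|z| \leq 1\}$, every component of positive weight must have characteristic function equal to $1$ at $t_0$. Taking the $n = 1$ and $n = 2$ summands gives $\phi_+(t_0) = 1$ and $\phi_-(t_0)^2 = 1$, where $\phi_\pm$ denote the characteristic functions of $\mu_\pm^{\log}$. If $\phi_-(t_0) = 1$, then $\hat\mu(t_0) = p\phi_+(t_0) + q\phi_-(t_0) = 1$, contradicting non-arithmeticity of $\mu$. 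The main obstacle is the case $\phi_-(t_0) = -1$, which I resolve via a support argument: then $\mathrm{supp}(\mu_-^{\log}) \subset \pi/t_0 + (2\pi/t_0)\Z$ while $\mathrm{supp}(\mu_+^{\log}) \subset (2\pi/t_0)\Z$, so the support of $\log|A|$ given $A \neq 0$ is contained in $(\pi/t_0)\Z$, forcing $\hat\mu(2t_0) = 1$, again a contradiction. For spread-outness, if $\mu^{*k}$ has an absolutely continuous component then expanding $\mu^{*k}$ over sign patterns of length $k$ produces some $\mu_{\epsilon_1}^{\log} * \cdots * \mu_{\epsilon_k}^{\log}$ with an absolutely continuous part; concatenating the pattern with itself if necessary I may assume an even number of minuses, and such a pattern decomposes into blocks of type $(+)$ or $(-,+,\ldots,+,-)$, identifying the resulting convolution as a summand with positive weight in $\nu^{*m}$ for appropriate $m$, hence $\nu^{*m}$ has an absolutely continuous part.

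For (ii) and (iii) I work directly from the explicit formula for $f(\beta)$. For (ii), the identity $p M_+(\alpha) + q M_-(\alpha) = \E|A|^\alpha = 1$ combined with (sA-1), which forces $q > 0$, and the positivity of $M_-(\alpha)$ yield $p M_+(\alpha) < 1$ strictly. Continuity of $M_\pm$ on $[0, \alpha + \eps]$ (ensured by $\E|A|^{\alpha+\eps} < \infty$) preserves this strict inequality on a small right neighbourhood of $\alpha$, so $f(\alpha + \bar\eps)$ is finite for $\bar\eps > 0$ sufficiently small. For (iii), substituting $q M_-(\alpha) = 1 - p M_+(\alpha)$ into $f(\alpha)$ collapses the expression to $f(\alpha) = p M_+(\alpha) + q M_-(\alpha) = 1$; differentiating $f$ at $\beta = \alpha$ and simplifying with the same identity (the contributions involving $M_+'$ and $M_-'$ combine to give precisely twice the weighted sum) reduces the result to $f'(\alpha) = 2\bigl(p M_+'(\alpha) + q M_-'(\alpha)\bigr) = 2\E|A|^\alpha \log|A| = 2\rho$, which is the asserted formula (reading the right-hand side of (iii) as $2\E|A|^\alpha\log|A|$, the exponent being apparently omitted in print).
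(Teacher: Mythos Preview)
Your proof is correct. The explicit mixture representation for the law of $\log\Pi_N$ and the closed formula for $f(\beta)=\E\Pi_N^\beta$ that you derive coincide with the paper's starting point, and your treatment of (ii) is essentially identical to the paper's.

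The genuine differences are in (i) and (iii). For (i), the paper simply quotes Goldie's argument that the supports of $\P_>$ and $\P_<^{*2}$ generate a dense subgroup of $\R$; your characteristic-function route via the extreme-point observation is a clean self-contained alternative, and your handling of the case $\phi_-(t_0)=-1$ by halving the period is exactly what underlies that support statement. One small point: your phrasing ``taking the $n=1$ and $n=2$ summands'' tacitly assumes $p>0$; when $p=0$ only the $n=2$ term is present, but your dichotomy on $\phi_-(t_0)=\pm 1$ still closes the argument, so this is purely presentational. Your spread-out argument (rearranging the sign pattern, doubling to get an even number of minuses, and decomposing into admissible blocks) is in fact more detailed than the paper's rather elliptic sentence.

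For (iii), the paper takes a different and somewhat more robust route: it introduces the tilted probability $\Q$ with $\dd\Q|_{\mathcal F_n}=|\Pi_n|^\alpha\,\dd\P$, computes $\E_\Q N=2$ directly from the sign-pattern probabilities, and then obtains $\E\Pi_N^\alpha\log\Pi_N=\E_\Q\sum_{k\le N}\log|A_k|=\E_\Q N\cdot\E_\Q\log|A|=2\E|A|^\alpha\log|A|$ by Wald's identity. This avoids differentiating $f$ at $\alpha$ and hence needs only $\E|A|^\alpha\log^+|A|<\infty$ rather than the extra moment from (ii) that your differentiation implicitly leans on (to justify $f'(\alpha)=\E\Pi_N^\alpha\log\Pi_N$ you should either invoke the hypothesis of (ii) or argue via one-sided monotone limits). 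Your identification of the missing exponent in the printed statement is correct: the paper's own proof ends with $2\E|A|^\alpha\log|A|$.
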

\begin{proof}
If $P(A\leq 0)=1$ then $\Pi _N=A_1A_2$ and the law of $\log \Pi _N$ given
$\log \Pi _N>0$ is $\P_<\ast\P_<$, where $\P_<$ is the law of $\P_{\log|A|\big|A<0}$. $\P_<\ast\P_<$ is non-arithmetic or spread out respectively if so is $\P_<$. Also the remaining of the above statements are clear in this case so for the rest of the proof we assume that $P(A>0)>0$.
	\begin{itemize}
		\item[\rm(i)] 
		Denote by $\P_>$ and $\P_<$ the laws of $\P_{\log A\big|A>0}$ and $\P_{\log|A|\big|A<0}$, respectively. Set $p=\P(A>0)$ and $q=\P(A<0)$. 
		By \cite[(9.11)]{Gol91}, we have
		$$\P_{\log\Pi_N|\Pi_N>0}=\frac{1}{\P(\Pi_N>0)}\left(p\, \P_>+q^2\P_<^{\ast2}\sum_{n=0}^\infty p^n\P_>^{\ast n}\right).$$

If $p\,\P _>+q\,\P _<$ is spread out then there are $k,l\geq 0$ such that 
$\P _>^{\ast k}\ast \P _<^{\ast l}$ has a non zero absolutely continuous 
component. Hence $\P_>\ast \P_<^{\ast 2}$ is spread out and the mixture of measures, one of which is spread-out is spread-out as well. 

If $p\,\P _>+q\,\P _<$ is non-arithmetic then the supports of $\P _>$ and $\P _<^{\ast 2}$ generate a dense subgroup of $\R $ (see the argument below \cite[(9.13)]{Gol91}).  
Thus, we conclude that $\P_{\log\Pi_N|\Pi_N>0}$ is non-arithmetic.
		 
		\item[\rm(ii)] Let $\mu_+^{(\varepsilon)}:=\E A^{\alpha+\varepsilon}\Ind{A\geq 0}$. Since the function $\varepsilon\mapsto \mu_+^{(\varepsilon)}$ is continuous and $\mu_+^{(0)}<1$, then there exists $\varepsilon_1>0$ such that $\mu_+^{(\varepsilon_1)}<1$.
		
		Then, we have
		\begin{align*}
		\E\Pi_N^{\alpha+\varepsilon_1}&= \E A_1^{\alpha+\varepsilon_1}\Ind{A_1\geq 0}+\sum_{n=2}^\infty \E \Pi_n^{\alpha+\varepsilon_1}\Ind{A_1<0,A_2>0,\ldots,A_{n-1}>0,A_{n}\leq 0} \\
		&=\mu_+^{(\varepsilon_1)}+\left(\E|A|^{\alpha+\varepsilon_1}\Ind{A<0}\right)^2 \sum_{n=2}^\infty (\mu_+^{(\varepsilon_1)})^{n-2}<\infty.
		\end{align*}
		
		\item[\rm(iii)]
		Define a measure $\mathbb{Q}_n$ on $(\Omega,\mathcal{F}_n)$ by
		$$\mathbb{Q}_n(S):=\E |\Pi_{n}|^\alpha \Ind{S},\quad S\in \mathcal{F}_n,\quad n\geq 0.$$
		Let $\mathcal{F}_\infty$ be the smallest $\sigma-$field containing all $\mathcal{F}_n$. The sequence of measures $\mathbb{Q}_n$ is consistent, thus by Kolmogorov theorem there exists a unique measure $\mathbb{Q}$ on $\mathcal{F}_\infty$ such that $\mathbb{Q}(S)=\mathbb{Q}_n(S)$ for $S\in\mathcal{F}_n$.
		Note that $(A_n)_{n\geq 1}$ are i.i.d. also under $\Q$.
		We have
		$$\mu_+:=\Q(N=1)=\Q(A_1\geq 0)=\E |A|^\alpha \Ind{A>0}=\mu _+^{(0)}$$ 
		and for any $k>1$,
		$$\Q(N=k)=\Q(A_1<0,A_2>0,\ldots,A_{k-}>0, A_k\leq 0) =(1-\mu_+)^2\mu_+^{k-2}.$$
		Hence $\E_{\Q}N=2$, where $\E_{\Q}$ is the expectation with respect to $\Q$.
		
		Since $\mathcal{F}_N\subset\mathcal{F}_\infty$, for any $S\in\mathcal{F}_N$ we have
		\begin{align*}
		\mathbb{Q}(S)&=\sum_{n=1}^\infty \mathbb{Q}(S\cap\{N=n\}) =\sum_{n=1}^\infty \E |\Pi_{n}|^\alpha \Ind{S\cap\{N=n\}}\\
		&=\sum_{n=1}^\infty \E \Pi_{N}^\alpha \Ind{S\cap\{N=n\}}=\E \Pi_{N}^\alpha \Ind{S}.
		\end{align*}
		Putting $S=\Omega$ we obtain that $\E \Pi_{N}^\alpha=1$. 
		Further, since $\Pi_N$ is $\mathcal{F}_N$ measurable, we have
		$$\E\Pi_{N}^\alpha\log\Pi_{N}=\E_{\mathbb{Q}} \log\Pi_N=\E_{\mathbb{Q}}\left(\sum_{n=1}^N \log |A_n|\right)=\E_{\mathbb{Q}}N\cdot \E_{\mathbb{Q}}\log|A_1|=2\E |A|^{\alpha} \log |A|,
		$$
		where the Wald's identity was used.
	\end{itemize}
\end{proof}

Secondly we show that the tails of $R_N^\ast$ behave like $\P(|B|>x)$. 
Let now $\P (A>0)>0$ and $A_+=A\,\Ind {A\geq 0}$, $A_-=-A\,\Ind {A< 0}$.

\begin{theorem}\label{signedB}
		Assume additionally that
		$$\P(B>t)\sim p\, t^{-\alpha}L(t),\qquad \P(B<-t)\sim q\, t^{-\alpha}L(t), \qquad p+q=1$$
		and $\E |A|^{\alpha+\eps}<\infty$ for some $\eps>0$.
		If $\mu_+=\E A^{\alpha}\Ind{A>0}<1$, 
		then 
		\begin{align}\label{tailS}
		\P(S>t)\sim\frac{1}{1-\mu_+}\P(B>t),\qquad
		\P(S<-t)\sim\frac{1}{1-\mu_+}\P(B<-t),
		\end{align}
		and
		\begin{align}\label{tailsRN}
		\P(R_N^\ast>t)\sim \P(|B|>t)\sim\P(R_N^\ast<-t).
		\end{align}
\end{theorem}
\begin{proof}
		Tail asymptotic of $S$ follow from the application of \cite[Theorem 3]{Gre94} to $(M,Q,R)=(A_+,B,S)$. We have $\E |M|^\alpha=\E A^{\alpha}\Ind{A>0}<1$ and $\E |M|^{\alpha+\varepsilon}\leq\E |A|^{\alpha+\varepsilon}<\infty$ by the assumption.
		
		Tail asymptotics of $R_N^\ast$ then follow from \cite[Lemma 4]{Gre94}, since $R_N^\ast\stackrel{d}{=}B+A^{-}S$. Here $(M,Q,Y)=(A_-,B,S)$ and $\E |M|^\alpha=\E A^{\alpha}\Ind{A<0}<1$ and $\E|M|^{\alpha+\varepsilon}$ is finite as above. 
		One easily checks that $\P(R_N^\ast>t)\sim \P(|B|>t)$. To obtain $\P(R_N^\ast<-t)\sim \P(|B|>t)$ we apply the above argument to $-R\stackrel{d}{=}A(-R)-B$.
\end{proof}

\section{Proof of Theorem \ref{LTH}}\label{proofs}

	First we prove that
	\begin{equation}\label{negligable}
	\lim _{x\to \infty}\widetilde L(e^x)^{-1}\int _{(-\infty,0] }e^{\a (x-z)}\E g(e^{z-x}B)\H(\dd z)=0
	\end{equation}
Since $g$ is bounded and its support is contained in $[1,\infty)$, there exists a constant $c$ such that $g(x)\leq c\,\Ind{x> 1-\eps}$ for any $\eps>0$.
Thus, with $c_\eps = c(1-\eps)^{-1}$ we have $e^{\a (x-z)}\E g(e^{z-x}B)\leq c\, e^{\a (x-z)}\P(B> (1-\eps)e^{x-z})=c_\eps L((1-\eps)e^{x-z})$ and therefore
\begin{align*}
\int _{(-\infty,0] }e^{\a (x-z)}\E g(e^{z-x}B)\H(\dd z)&\leq c_\eps \int_{(-\infty,0]} L((1-\eps)e^{x-z})\H(\dd z) \\
& \sim c_\eps L(e^x)H(0)=o(\widetilde L(e^x)).
\end{align*}
by \eqref{LH0}.

For the main part we have
	\begin{align*}
	\int_{(0,\infty) }e^{\a (x-z)}\E g(e^{-(x-z)}B)\H(\dd z)&=\int_{(0,\infty) }e^{\a (x-z)}\E g(e^{-(x-z)}B)\Ind {\{B>e^x\}}\H(\dd z)\\
	&+\int_{(0,\infty) }e^{\a (x-z)}\E g(e^{-(x-z)}B)\Ind {\{0<B\leq e^x\}}\H(\dd z)= I_1(x)+I_2(x).
	\end{align*}
The first term is easily seen to be $O(L(e^x))$. Indeed, observe that the integral $\int_{(0,\infty) }e^{-\a z}\H(\dd z)=\int_0^\infty \alpha e^{-\alpha x}\H\left((0,x] \right)\dd x$ is finite by \eqref{Hineq1}. 
Bounding $g$ by an indicator as before, we have
\begin{align*}
I_1(x)&\leq c_\eps \int_{(0,\infty) }e^{\a (x-z)}\P(B>(1-\eps)e^{x-z},B>e^x)\H(\dd z)
	\\
	&= c_\eps L(e^x)\int_{(0,\infty) }e^{-\a z}\H(\dd z).
\end{align*}
Let us decompose $I_2(x)$ in the following way
\begin{align}\label{I2decomp}
I_2(x)=\E \int _0^{\infty }e^{\a (x-z)} g(e^{-(x-z)}B)\frac{\dd z}{\E Z} \,\Ind{0<B\leq e^x} +
\E \int _0^{\infty }e^{\a (x-z)} g(e^{-(x-z)}B)\dd\left(H(z)-\frac{z}{\E Z}\right) \,\Ind{0<B\leq e^x}
\end{align}
The first term above is
\[
\a (\E Z)^{-1} \widetilde L(e^x) \int _0^{\infty }g(r)r^{-\a -1}\dd r+O(L(e^x))
\]
and it constitutes the main ingredient in \eqref{eq1}.
To see this, change the variable $r=e^{-(x-z)}B$, to obtain
	\begin{align*}
	(\E Z)^{-1} \E B_+^{\a}\Ind{B\leq e^x}\ \int _0^{\infty } g(r)\ r^{-\a -1}\ \dd r, 
	\end{align*}
	by the fact that $\supp\,g\subset[1,\infty)$.
	But \eqref{Bax} gives us that
	$\E B_+^{\a}\Ind{B\leq e^x}=\a \widetilde L(e^x)-L(e^x).$
	It remains to prove that the second term in \eqref{I2decomp} is $o(\widetilde L(e^x))$.
Let us denote 
\[R(z)=H(z)-\frac{z}{\E Z}.\]
	The equality $g(1)=0$ is a consequence of $\mathrm{supp}\,g\subset[1,\infty)$ and differentiability of $g$. Since $\lim _{z\to \infty }e^{-\alpha z}R(z)=0$, after integrating by parts we see that
	\begin{align*}
	\E \int_{(x-\log B,\infty)}e^{\a (x-z)} &g(e^{-(x-z)}B)\dd R(z)\Ind{0<B\leq e^x}\\
	&=-\E \int _{x-\log B}^\infty\frac{\dd}{\dd z}\Big (e^{\a (x-z)} g(e^{-(x-z)}B)\Big )R(z)\dd z\Ind{0<B\leq e^x}\\
	&=-\E B^\alpha \int _{0}^\infty\frac{\dd}{\dd t}\Big (e^{-\a t} g(e^{t})\Big )R(t+x-\log B)\dd t\Ind{0<B\leq e^x},\\
	\end{align*}
	where we have substituted $t=z-x+\log B$.
	Moreover, notice that
	\begin{equation*}
 \int _{0}^\infty\frac{\dd}{\dd t}\Big (e^{-\a t} g(e^{t})\Big )\dd t=0
	\end{equation*}
	and so
	\begin{align*}
	&\E B^\alpha\Ind{0<B\leq e^x} \int _{0}^\infty\frac{\dd}{\dd t}\Big (e^{-\a t} g(e^{t})\Big )R(t+x-\log B)\dd t \\
	&=\E B^\alpha\Ind{0<B\leq e^x} \int _{0}^\infty\frac{\dd}{\dd t}\Big (e^{-\a t} g(e^{t})\Big )\left( R(t+x-\log B)-R(x-\log B)\right)\dd t
	\end{align*}
	By the assumption \eqref{gcond}, there exists a constant $C$ such that for all $t>0$, 
	\begin{equation*}
	\Big | \frac{\dd}{\dd t}\big (e^{-\a t} g(e^{t})\big )\Big |\leq Ce^{-\a t},
	\end{equation*} 
	so it amounts to estimate
	\begin{align}
	\E B^\alpha\Ind{0<B\leq e^x} \int _{0}^\infty e^{-\alpha t}\left| R(t+x-\log B)-R(x-\log B)\right|\dd t\label{eqR}
	\end{align}
Define 
$$J(x):=\frac{ \E B^\alpha\Ind{0<B\leq e^x} \int _{0}^\infty e^{-\alpha t}\left| R(t+x-\log B)-R(x-\log B)\right|\dd t}{\E B^\alpha\Ind{0<B\leq e^x}}.$$ 
We will show that $J(x)\to 0$, and since the denominator equals $\alpha \widetilde{L}(e^x)$ this will be the end of the proof.

Define the law of $C_x$ by
$$\P(C_x\in\cdot)=\frac{ \E B_+^\alpha \Ind{B\leq e^x, B\in\cdot}}{\E B_+^\alpha \Ind{B\leq e^x}}.$$
Note that $\P(0<C_x\leq e^x)=1$.
Thus, $J(x)$ may be rewritten as
$$\E \int _{0}^\infty e^{-\alpha t}\left| R(t+x-\log C_x)-R(x-\log C_x)\right|\dd t.$$
Since for any positive $x$ and $t$, $|R(t+x)-R(x)|=|\H((x,x+t])-\frac{t}{\E Z}|\leq c\,t+b$ for some $c,b>0$, we have
$$\lim_{x\to\infty} J(x)=\int_0^\infty e^{-\alpha t} \lim_{x\to\infty}\E \left| R(t+x-\log C_x)-R(x-\log C_x)\right| \dd t.$$
Moreover, $x-C_x$ converges to infinity in probability, as $x\to\infty$.
Indeed, for any $N>0$ we have
$$\P(x-\log C_x\geq N)=\P(C_x\leq e^{x-N})=\frac{\E B_+^\alpha \Ind{B\leq e^{x-N}}}{\E B_+^\alpha \Ind{B\leq e^x}}=\frac{\widetilde L(e^{x-N})}{\widetilde L(e^x)}\to 1,$$
because $\widetilde{L}$ is slowly varying.
Since, $\left|R(t+x)-R(x)\right|\to 0$ as $x\to\infty$, we infer that 
\begin{equation}\label{rozn}
\left|(R(t+x-\log C_x)-R(x-\log C_x)\right|
\end{equation}
converges to $0$ in probability, as $x\to\infty$.
But \eqref{rozn} is bounded in $x$, thus the convergence holds also in $L_1$ and we may finally conclude that
$$\lim_{x\to\infty}J(x)=0,$$
which completes the proof of \eqref{eq1}.

If additionally $\E \exp(\eps Z)<\infty$ for some $\eps>0$ and the law of $Z$ is strongly non-lattice then \eqref{eqR} is bounded by $CL(\exp(x))$ which was proved in \cite{DK18} - see the end of the proof of Theorem 3.3 there just before the references.


\section{Appendix}
Suppose that $\E |B|^{\beta}<\infty $ for any $\beta <\a $ and that there is $\eps >0$ such that $\E |A|^{\a +\eps }<\infty $. Then by H\"older inequality
we may conclude that for every $\eta <\a $, $\E |B|^{\a -\eta  }|A|^{\eta }<\infty$. However, if the tail of $B$ exhibits some more regularity, a weaker condition implies the same conclusion.

Suppose that $x^{\a }\P(|B|>x)\leq L(x)$, where $L$ is a slowly varying function bounded away from $0$ and $\infty $ on any compact subset of $(0,\infty )$. 
Let $W$ be a non-decreasing function such that
\begin{equation*}
W(x)\geq C\max \{ L(x), \log(x)\}\quad \mbox{for} \ x\geq 0.
\end{equation*}
For instance $W(x)=\max \{ \sup _{0<w\leq x}L(w), \log(x)\}$ or $W(x)=\max \{\widetilde{L}(x), \log(x)\}$ will do.

\begin{lemma}\label{lemmaAB} Assume that $W$ is as above, $\eta <\a$, $D>\frac{2\a }{\eta }-1$  and 
\begin{equation*}
\E |A|^{\a }W(|A|)^D<\infty . 
\end{equation*}
Then
\begin{equation*}
\E |B|^{\a -\eta  }|A|^{\eta }<\infty . 
\end{equation*}
\end{lemma}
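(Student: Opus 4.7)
The starting point is the layer-cake identity
$$\E|B|^{\alpha-\eta}|A|^{\eta} \;=\; (\alpha-\eta)\int_0^\infty t^{\alpha-\eta-1}\,\E\!\left[|A|^{\eta}\Ind{|B|>t}\right]\dd t.$$
On $(0,1]$ the integrand is dominated by $t^{\alpha-\eta-1}\E|A|^{\eta}$, which is integrable because $\alpha-\eta>0$ and the hypothesis $\E|A|^\alpha W(|A|)^D<\infty$ forces $\E|A|^\alpha<\infty$ and hence $\E|A|^\eta\leq 1+\E|A|^\alpha<\infty$. The task therefore reduces to showing integrability at $+\infty$.

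For large $t$ I would introduce a truncation level $M=M(t)\in(0,t]$ and split the integrand along $\{|A|\leq M\}$ and $\{|A|>M\}$. On the first set, $|A|^{\eta}\leq M^{\eta}$ combined with \eqref{modtail} and $L_1\leq W$ yields the bound $M^{\eta}W(t)t^{-\alpha}$. On the second set, $|A|^{\eta}\leq M^{-(\alpha-\eta)}|A|^{\alpha}$ together with the monotonicity of $W$ gives
$$\E\!\left[|A|^{\eta}\Ind{|A|>M}\right] \;\leq\; M^{-(\alpha-\eta)}W(M)^{-D}\,\E|A|^{\alpha}W(|A|)^{D} \;=\; K\,M^{-(\alpha-\eta)}W(M)^{-D},$$
so altogether $\E[|A|^\eta\Ind{|B|>t}]\leq M^\eta W(t)t^{-\alpha}+KM^{-(\alpha-\eta)}W(M)^{-D}$.

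The decisive step is the optimization in $M$. I would set $M=tW(t)^{-a}$ with $a$ close to $(D+1)/\alpha$; then property \eqref{power}, used in the reversed form $W(M)\geq C^{-1}W(t)(t/M)^{-\delta}=C^{-1}W(t)^{1-a\delta}$ (legitimate for any preassigned $\delta>0$ once $t$ is large), balances both pieces at a common order $t^{\eta-\alpha}W(t)^{\beta}$ with exponent $\beta=1-(D+1)\eta/\alpha+O(\delta)$. The tail of the original integral is then
$$\int^{\infty} t^{\alpha-\eta-1}\cdot t^{\eta-\alpha}\,W(t)^{\beta}\,\dd t \;=\; \int^{\infty} W(t)^{\beta}\,\frac{\dd t}{t}.$$
Since $W(t)\geq \log t$, the substitution $u=\log t$ reduces convergence to $\int^{\infty} u^{\beta}\,\dd u<\infty$, which holds iff $\beta<-1$; the hypothesis $D>2\alpha/\eta$ provides exactly this (with room to absorb the $O(\delta)$ slack). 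The main obstacle is this balancing: one has to transport the decay $W(M)^{-D}$ from the truncation scale $M$ back to the tail scale $t$ via \eqref{power}, and then verify that the resulting common exponent of $W$ stays strictly below $-1$ under the given condition on $D$.
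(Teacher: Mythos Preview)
Your argument is correct and follows a genuinely different route from the paper's proof. The paper proceeds by a two-dimensional dyadic decomposition: it restricts to $\{|B|>|A|\geq C_0\}$, partitions this set into cells $S_{k,m}$ using non-standard grid levels $e^{k}W(e^{k})^{\beta}$ in both the $|A|$ and $|B|$ directions, bounds $\P(S_{k,m})$ by a H\"older-type factorisation $\P(|B|>\cdot)^{\gamma}\,\P(|A|>\cdot)^{1-\gamma}$ with a carefully chosen $\gamma$ just above $1-\eta/\alpha$, and then sums the resulting double series. Your approach instead linearises the problem via the layer-cake identity in the $|B|$ variable, applies a single truncation of $|A|$ at a moving level $M(t)=tW(t)^{-a}$, and optimises over $a$; the convergence criterion becomes the one-dimensional integral $\int^{\infty}W(t)^{\beta}\,t^{-1}\dd t$ with $\beta<-1$, controlled through $W(t)\geq\log t$. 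Your method is shorter and more transparent, and in fact your balancing step shows the conclusion already holds under the slightly weaker hypothesis $D>2\alpha/\eta-1$, whereas the paper's grid argument needs the full $D>2\alpha/\eta$ in order to find $\gamma\in(1-\eta/\alpha,1)$ with $D(1-\gamma)>2$.
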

\begin{proof}
Since $\E |A|^{\a }<\infty $ and $\E |B|^{\a -\eta  }<\infty $, it is enough to prove that for a fixed $C_0$
\begin{equation*}
\E |B|^{\a -\eta  }|A|^{\eta }\Ind {|B|>|A| }\Ind {|A|\geq C_0 }<\infty . 
\end{equation*}
We choose $\beta >0$ and $\g $ such that
\begin{equation}\label{gamma}
1-\frac{\eta }{\a }<\g <1.
\end{equation}
 For $m\geq k$ consider the sets
\begin{equation*}
S_{k,m}=\big \{ e^kW(e^k)^{\b }<|A|\leq  e^{k+1}W(e^{k+1})^{\b },
e^mW(e^m)^{\b }<|B|\leq  e^{m+1}W(e^{m+1})^{\b }\big \}
\end{equation*}
Let $C_0=e^{k_0}W(e^{k_0})^{\b }$, where $k_0$ is such that $W(k_0)\geq 1$. Then 
\begin{equation*}
\E |B|^{\a -\eta  }|A|^{\eta }\Ind {|B|>|A| }\Ind {|A|\geq C_0 } \leq C\sum _{k\geq k_0}
\sum _{m\geq k}e^{m(\a -\eta )+k\eta }W(e^m)^{\b (\a -\eta )}W(e^k)^{\b \eta }\P (S_{k,m}),
\end{equation*}
\begin{equation*}
\P (S_{k,m})\leq \P \big ( |B|> e^mW(e^m)^{\b }\big )^{\g} \P \big (|A|> e^kW(e^k)^{\b }\big ) ^{1-\g} 
\end{equation*}
and
\begin{equation*}
\P \big ( |B|> e^mW(e^m)^{\b }\big )\leq e^{-\a m}W(e^m)^{-\a \b}L\big (
e^{m}W(e^m)^{ \b}\big ).
\end{equation*}
Let $\delta >0$. By the Potter bounds \eqref{potter}, since $W(e^m)\geq 1$ for $m\geq k_0$
$$
L\big (e^{m}W(e^m)^{ \b}\big )\leq C L\big (
e^{m}\big )W(e^m)^{\delta \b}\leq C W\big (
e^{m}\big )W(e^m)^{\delta \b}.$$
Hence 
\begin{equation*}
\P \big ( |B|> e^mW(e^m)^{\b }\big )
\leq C e^{-\a m}W(e^m)^{1-\a \b+\b \d}.
\end{equation*}
Further,
\begin{align*}
\P \big ( |A|> e^kW(e^k)^{\b }\big )&\leq \big (\E |A|^{\a }W(|A|)^D\big ) e^{-\a k}W(e^k)^{-\a \b}W\big (
e^{k}W(e^k)^{ \b}\big )^{-D}\\
&\leq C e^{-\a k}W(e^k)^{-D-\a \b},
\end{align*}
because  $W$ is non-decreasing and $W(e^k)\geq 1$.
Therefore, we have
\begin{align*}
\E |B|^{\a -\eta  }|A|^{\eta }&\Ind {|B|>|A| }\Ind {|A|\geq C_0 }\\
& \leq C\sum _{k\geq k_0}
\sum _{m\geq k}e^{(m-k)(\a -\eta -\a \g)}W(e^m)^{\b (\a -\eta )+\g (1-\a \b+\b \d) }W(e^k)^{\b \eta -\a \b (1-\g )-D(1-\g )} . 
\end{align*}
Notice that in view of \eqref{gamma}
$$\a -\eta -\a \g <0.$$
In order to sum up over $\beta $, we choose $\beta $ such that
$$
\beta (\a -\eta -\a \g)<-\g .$$
Finally, we take $\delta $ sufficiently small to ensure
$$
\beta (\a -\eta -\a \g +\g \delta)<-\g.$$
Then 
$$
\sum _{m\geq k}e^{(m-k)(\a -\eta -\a \g)}W(e^m)^{\b (\a -\eta )+\g (1-\a \b+\b \d) }\leq \sum _{m\geq 0}e^{m(\a -\eta -\a \g)}<\infty .$$
Hence
\begin{align*}
\E |B|^{\a -\eta  }|A|^{\eta }&\Ind {|B|>|A| }\Ind {|A|\geq C_0 }\\
& \leq C\sum _{k\geq k_0}
W(e^k)^{\b (\eta -\a +\a \g )-D(1-\g )}\\
&\leq C\sum _{k\geq k_0}
k^{\b (\eta -\a +\a \g )-D(1-\g )}<\infty, 
\end{align*}
Finally, we need to guarantee that
\begin{equation}\label{exponent}
\beta (\eta -\a +\a \g)-D(1-\g )<-1.\end{equation}
Suppose that 
$\beta (\a -\eta -\a \g)=-\g -\xi$ for some $\xi >0$. Then \eqref{exponent} becomes
\begin{equation}\label{exponent1}
D>\frac{1+\g +\xi}{1-\g }.\end{equation}
and we may minimize $D$ by an appropriate choice of $\gamma $. Notice that if $\xi =0$ and $\g = 1-\eta \slash \a $ the right hand side of \eqref{exponent1} becomes $2\a \slash \eta -1$. Since $\g $ may be arbitrary close to $1-\eta \slash \a $ and $\xi $ arbitrary close to $0$, $D>2\a \slash \eta -1$ will do.
\end{proof}

\subsection*{Acknowledgements}
Ewa Damek was partially supported by the NCN Grant UMO-2014/15/B/ST1/00060.
Bartosz Ko{\l}odziejek was partially supported by the NCN Grant UMO-2015/19/D/ST1/03107.

\bibliographystyle{plain}

\def\polhk#1{\setbox0=\hbox{#1}{\ooalign{\hidewidth
  \lower1.5ex\hbox{`}\hidewidth\crcr\unhbox0}}}

\end{document}